\newcommand{\C}{ \mathbb{C}}
\newcommand{\D}{ \mathbb{D}}
\newcommand{\T}{\mathbb{T}}
\newcommand{\Z}{ \mathbb{Z}}
\newcommand{\R}{ \mathbb{R}}
\newcommand{\dist}{\operatorname{dist}}
\newcommand{\norm}[1]{\left\| #1 \right\|}
\newcommand{\inner}[1]{\left< #1 \right>}
\newcommand{\N}{\mathbb{N}}
\renewcommand{\Re}[1]{\operatorname{Re}#1}
\renewcommand{\Im}[1]{\operatorname{Im}#1}
\renewcommand{\phi}{\varphi}
\numberwithin{equation}{section}
\theoremstyle{plain}
\newtheorem{Proposition}[equation]{Proposition}
\newtheorem{Corollary}[equation]{Corollary}
\newtheorem{Theorem}[equation]{Theorem}
\newtheorem{Lemma}[equation]{Lemma}
\theoremstyle{definition}
\newtheorem{Example}[equation]{Example}
\newtheorem{Remark}[equation]{Remark}
\newtheorem{Question}[equation]{Question}
\def\HHH{\mathfrak{H}}
\newcommand{\ind}{\mathop{\rm ind}}
\def\bbD{\mathbb{D}}
\newcommand{\Onr}{\OOO_{n,r}}
\def\OOO{\mathfrak{O}}
\newcommand{\Arg}{\mathop{\rm Arg}}
\newcommand{\Arc}[2]{\lfloor#1,#2\rfloor}
\def\bbR{\mathbb{R}}
\def\bbH{\mathbb{H}}
\newcommand{\sS}{\mathcal{S}}
\begin{document}

\bibliographystyle{amsplain}

    \title{An extremal problem for characteristic functions}

\author[I.~Chalendar]{Isabelle Chalendar}
\address{Institut Camille Jordan, University of Lyon I, 43 Boulevard du 11 Bovembre 1918,
69622 Villeurbanne cedex,
France}
\email{chalendar@math.univ-lyon1.fr}
\urladdr{http://math.univ-lyon1.fr/~chalenda/calendar.html}

    \author[S.R.~Garcia]{Stephan Ramon Garcia}
     \address{   Department of Mathematics\\
            Pomona College\\
            Claremont, California\\
            91711 \\ USA}
    \email{Stephan.Garcia@pomona.edu}
    \urladdr{http://pages.pomona.edu/~sg064747}

    \author[W.T.~Ross]{William T. Ross}
    \address{Department of Mathematics and Computer Science\\
            University of Richmond\\
            Richmond, Virginia\\
            23173 \\ USA}
    \email{wross@richmond.edu}
    \urladdr{http://facultystaff.richmond.edu/~wross}

	\author[D.~Timotin]{Dan Timotin}
	\address{Simion Stoilow Institute of Mathematics of the Romanian Academy, PO Box 1-764, Bucharest 014700, Romania}
	\email{Dan.Timotin@imar.ro}
	\urladdr{http://www.imar.ro/~dtimotin}

    \keywords{Extremal problem, truncated Toeplitz operator, Toeplitz operator, Hankel operator, complex symmetric operator.}
    \subjclass[2000]{47A05, 47B35, 47B99}

    \thanks{Second author partially supported by National Science Foundation Grant DMS-1001614. Fourth author partially supported by a grant of the Romanian National Authority for Scientific
Research, CNCS Ð UEFISCDI, project number PN-II-ID-PCE-2011-3-0119.}

\begin{abstract}
	Suppose $E$ is a subset of the unit circle $\T$ and $H^\infty\subset L^\infty$ is the Hardy subalgebra. We examine the problem of finding the distance from the characteristic function of $E$ to $z^nH^\infty$. This admits an alternate description as a dual extremal problem. Precise solutions are given in several important cases. The techniques used involve the theory of Toeplitz and Hankel operators as well as the construction of certain conformal mappings.
\end{abstract}

\maketitle

\section{Introduction}

	The linear extremal problem 
	\begin{equation}\label{eq-ExtremalGeneral}
		\Lambda(\psi) := \sup_{F \in b(H^1)} \left|\frac{1}{2 \pi i} \int_{\T} \psi(\zeta) F(\zeta) \,d\zeta \right|,
	\end{equation}
	where $b(H^1)$ is the unit ball of the classical Hardy space $H^1$ \cite{Duren, Ga} on the open unit disk $\D$ and
	$\psi$ is in $L^{\infty}$ of the unit circle $\T$, has been studied by many different authors
	over the last century (see \cite{NLEPHS} for a brief survey and a list of references). 
For some historical context, let us mention an early result  due to Fej\'{e}r \cite{Fejer},
	which says that for any complex numbers $c_0, c_1,\ldots, c_n$ one has
	\begin{equation*}
		\Lambda\left(\frac{c_0}{z} + \cdots + \frac{c_{n}}{z^{n + 1}}\right) = \norm{H},
	\end{equation*}
	where $H$ is the Hankel matrix
	(blank entries to be treated as zeros)
	\begin{equation*}
		H =
		\begin{pmatrix}
			 c_0 & c_1 & c_2 & \cdots & c_n \\
			 c_1 & c_2 & \cdots & c_n &  \\
			 c_2 & \cdots & c_n & &\\
			 \vdots & \iddots & &  &  \\
			 c_n &  &  & &
		\end{pmatrix}
	\end{equation*}
	and $\norm{H}$ denotes the operator norm of $H$ (i.e., the largest singular value of $H$). In the special case when $c_j = 1$, $0 \leq j \leq n$, Egerv\'{a}ry \cite{Egervary} obtained explicit formulae for $\Lambda$ and for the extremal function $F$.
	Fej\'{e}r's result was generalized by Nehari in terms of Hankel operators (see \cite{Nehari}, \cite[Theorem 1.1.1]{Peller} and \eqref{norm-Hankel-dist} below). 
%	In the special case when $c_j = 1$, $0 \leq j \leq n$, Egerv\'{a}ry \cite{Egervary} proved the explicit formula
%	\begin{equation}\label{eq-EgervaryAngle}
%		\Lambda\left(\frac{1}{z} + \frac{1}{z^2} + \cdots + \frac{1}{z^{n + 1}}\right) = \frac{1}{2} \sec \frac{(n + 1) \pi}{2 n + 3}
%	\end{equation}
%	and showed that the polynomial
%	\begin{equation}\label{eq-Egervary}
%		F(z) := \frac{4}{2 n + 3} \left[ \sin \frac{(n + 1) \pi}{2 n + 3}
%		+ z \sin \frac{n \pi}{2 n + 3}  + \cdots + z^n \sin \frac{\pi}{2 n + 3} \right]^2
%	\end{equation}
%	 is an extremal function, that is to say,
%	 $$\Lambda\left(\frac{1}{z} + \frac{1}{z^2} + \cdots + \frac{1}{z^{n + 1}}\right) = \left|\frac{1}{2 \pi i} \int_{\T} \psi(\zeta) F(\zeta) \,d\zeta \right|.$$
	We refer the reader to \cite{Duren, Ga, Kha-Sr} for further references.

	It is also known \cite{CFT, NLEPHS}
	that for each $\psi \in L^{\infty}$ the supremum in \eqref{eq-ExtremalGeneral} is equal to
	\begin{equation}\label{eq-ExtremalQuadratic}
		\sup_{f \in b(H^2)} \left|\frac{1}{2 \pi i} \int_{\T} \psi(\zeta) f(\zeta)^2 \,d\zeta \right|,
	\end{equation}
	which is a quadratic extremal problem posed over the unit ball of the Hardy space $H^2$.  For rational $\psi$ there are techniques in \cite{CFT, NLEPHS} which lead not only to the supremum in \eqref{eq-ExtremalGeneral} but also to the extremal function $F$ for which the supremum is attained. For general $\psi \in L^{\infty}$ the supremum is difficult to compute and the extremal function may not exist or, even when it does exist, it may not be unique. 
		
	In this paper we discuss the family of extremal problems corresponding to 
	\begin{equation}\label{eq-Symbol}
		\psi(z) = \chi_E(z) \overline{z}^{n},
	\end{equation}
	where $E$ is a Lebesgue measurable subset of $\T$, $\chi_E$ denotes the characteristic function of $E$, and $n \in \Z$. We are thus interested in the quantities
	\begin{equation}\label{eq-Lambda}
		\Lambda_{n}(E) := \sup_{F \in b(H^1)} 
		\left|\frac{1}{2 \pi i} \int_E  F(\zeta) \overline{\zeta}^{n} \, d\zeta\right|
	\end{equation}
	
		If we note that
	\begin{equation*}
		\Lambda_n(\T) = \sup_{F \in b(H^1)} \big| \widehat{F}(n-1)\big|,
	\end{equation*}
	one may  interpret \eqref{eq-Lambda} as asking \emph{how large can be the contribution of the set $E$ to the $(n-1)$st Fourier coefficient of an $H^1$ function?}.
	
The main results of this paper are as follows. Using Hankel operators and distribution estimates for harmonic conjugates, we first show in Section~\ref{Section:Negative} that if $E$ has Lebesgue measure $|E| \in (0, 2 \pi)$ then
			$$\Lambda_{n}(E) = \tfrac{1}{2}, \quad n \leq 0.$$
			
For $n\ge 1$, we obtain some general estimates in Section~\ref{Section:Positive}. The central part of the paper is contained in Section~\ref{se:interval}, where we use a conformal mapping argument, along with our Hankel operator techniques, to obtain a formula for $\Lambda_{n}(I)$ when $n \geq 1$ and $I$ is an arc of the circle $\T$. It is remarkable that in the case $n=1$ the formula is explicit and may be extended to any measurable set $E\subset \T$ with $|E| \in (0, 2 \pi]$ (as shown in Section~\ref{se:more general sets}); namely, we have 
$$\Lambda_{1}(E) = \frac{1}{2} \sec\left(\frac{\pi \frac{|E|}{2}}{\pi + |E|}\right).$$			
			
	For $n\ge 2$ and arbitrary measurable sets $E$, one obtains  an upper bound for $\Lambda_n(E)$.

	Our work stems, perhaps somewhat surprisingly, from additive number theory.
	As noted in \cite[p.~325]{MTB}, an approach to the Goldbach conjecture
	using the Hardy-Littlewood circle method requires a bound on the expressions
	\begin{equation}\label{eq-Goldbach}
		\left| \int_{\mathfrak{m}} f(x)^2 e(-nx)\,dx \right|
	\end{equation}
	where $e(x) = \exp(2\pi i x)$, $f$ is a certain polynomial in $e(x)$, and $\mathfrak{m}$
	is a particular disjoint union of sub-intervals of $[0,1]$.  With slight adjustments in notation, one
	observes that problems of the form \eqref{eq-ExtremalQuadratic}, where $\psi$ is given by \eqref{eq-Symbol},
	are of some relevance to estimating the quantity in \eqref{eq-Goldbach}.  A
	similar approach, where the exponent $2$ replaced by $3$ in \eqref{eq-Goldbach},
	was famously used by I. M.~Vinogradov in his celebrated proof 
	that every sufficiently large odd integer is the sum of three primes. 
Needless to say, we do not expect our results to help solve the Goldbach conjecture.  We merely point out how these 
	extremal problems are of interest outside  complex analysis and operator theory. 	
	%\noindent\textbf{Acknowledgments}:  We wish to thank Dustin Rodrigues ('12) for producing
	%Figures \ref{FigureA} and \ref{FigureB}.
	
\section{Preliminaries}\label{Section:Preliminaries}

	For $p \in [1, \infty]$, we let $L^p$ represent the standard Lebesgue spaces on the circle (with respect to normalized Lebesgue measure), with the integral norms $\|\cdot\|_{p}$ for finite $p$ 
and with the essential supremum norm $\|\cdot\|_{\infty}$ for $L^{\infty}$. Let $\mathcal{C}:= \mathcal{C}(\mathbb{T})$ denote the complex valued continuous functions on $\T$ with the supremum norm $\|\cdot\|_{\infty}$. 	
	We let $H^p$ denote the classical Hardy spaces and $H^{\infty}$ denote the space of bounded analytic  functions on $\D$. As is standard, we identify $H^p$ with a closed subspace of $L^p$ via non-tangential boundary values on $\T$. See \cite{Duren, Ga} for a thorough treatment of this. 
	
	For $f \in L^1$ and $n \in \Z$, let 
	$$\widehat{f}(n) := \int_{-\pi}^{\pi} f(e^{i \theta}) e^{-i n \theta} \frac{d \theta}{2 \pi}$$
	denote $n$-th the Fourier coefficient of $f$ and let
	\begin{equation} \label{HC}
	\widetilde{f}(e^{i \theta}) := P.V. \int_{-\pi}^{\pi} f(e^{i(\theta - \varphi)}) \cot\left(\frac{\theta}{2}\right) \frac{d \varphi}{2 \pi}
	\end{equation}
	denote the harmonic conjugate of $f$. 

	In what follows, $E$ will be a Lebesgue measurable subset of the unit circle $\T$. We use $|E|$ to denote the (non-normalized) Lebesgue measure on $\T$
	so that $|E| \in [0, 2 \pi]$, so that $|I|$ coincides with arc length whenever $I$ is an arc on $\T$. We will use $E^{-}$ to denote the closure of $E$.
	
	Just so we are not dealing with trivialities in our extremal problem $\Lambda_{n}(E)$, we first dispose of the endpoint cases $|E| = 0$ and $|E| = 2 \pi$. Indeed 
	\begin{equation} \label{endpoint0}
	|E| = 0 \Rightarrow \Lambda_{n}(E) = 0, \quad n \in \Z,
	\end{equation}
	\begin{equation} \label{endpoint2pi}
	|E| = 2 \pi \Rightarrow \Lambda_{n}(E) = \begin{cases} 1 &\mbox{if } n \geq 1 \\ 
0& \mbox{if } n < 1.
	\end{cases}
	\end{equation}
%When $|E| = 2 \pi$, Cauchy's theorem tells us that $\Lambda_{n}(E) = 1$ when $n \geq 1$ and $\Lambda_n(E) = 0$ when $n < 1$. 

	A natural first step in considering any linear extremal problem is to identify the corresponding dual extremal problem.  In this case (see \cite{Ga} for the details), we can use the tools of functional analysis to rephrase the original extremal problem in \eqref{eq-ExtremalGeneral} as the dual extremal problem
	\begin{equation}\label{eq:definition of lambda_n with distance}
		 \Lambda_n(E)= \dist(\bar z^n\chi_E, H^\infty) = \inf\{\|\overline{z}^{n} \chi_{E} - g\|_{\infty}: g \in H^{\infty}\}.
	\end{equation}
	It turns out that the above \emph{$\inf$} can be replaced by a \emph{$\min$} \cite[p.~146]{Koosis}. 
	Since $$\mbox{dist}(\overline{z}^n \chi_{E}, H^{\infty}) = \mbox{dist}(\chi_{E}, z^n H^{\infty})$$ we see, for a fixed set $E \subset \T$, that 
	\begin{equation} \label{increasinginn}
	\Lambda_n(E) \le \Lambda_{n'}(E), \quad n \leq n'.
	\end{equation}
	 A quick review of the definition of $\Lambda_{n}$  shows that it is invariant under rotation. In other words, 
	 \begin{equation} \label{rotation-invariant}
	 \Lambda_{n}(e^{i \theta} E) = \Lambda_{n}(E), \quad n \in \Z, \quad \theta \in [0, 2 \pi].
	 \end{equation}
	The key to our investigation is the fact that $\Lambda_{n}(E)$ can also be 
	expressed in terms of the norm of a certain Hankel operator.  
	
\subsection{Hankel operators}	
	The \emph{Hankel operator} with \emph{symbol} $\phi \in L^{\infty}$ is defined to be 
	$$\HHH_{\phi}:H^2 \to H^2_-, \quad \HHH_{\phi}f := P_-( \phi f),$$ where 
	$P_-$ denotes the orthogonal projection from $H^2$ onto $H^2_-:=L^2 \ominus H^2$.
	With respect to the orthonormal bases $\{1,z,z^2,\ldots\}$ for $H^2$ and $\{\bar z ,\bar z^2,\bar z^3,\ldots\}$
	for $H^2_-$,  $\HHH_{\phi}$ has the (Hankel) matrix representation
	\begin{equation*}
		\HHH_{\phi} = \big( \widehat{\phi}(-j-k-1) \big)_{0 \leq j, k < \infty}.
	\end{equation*}
	By Nehari's theorem (see \cite{Nehari} \cite[Theorem 1.1.1]{Peller}),
	\begin{equation} \label{norm-Hankel-dist}
	\norm{\HHH_{\phi}} = \dist(\phi,H^{\infty}),
	\end{equation}
	from which we conclude, via \eqref{eq:definition of lambda_n with distance}, that
	\begin{equation}\label{eq:definition of lambda_n with hankel}
		\Lambda_n(E)= \norm{ \HHH_{\bar z^n \chi_E} },
	\end{equation}
	where
	\begin{equation*}
		\HHH_{\bar z^n \chi_E}=\big(\widehat{\chi_E}(n-j-k-1)\big)_{0 \leq j, k < \infty}.
	\end{equation*}
	We  also make use of the formula 
	\begin{equation}\label{eq:EssentialDistance}
		\norm{ \HHH_{\phi}}_e = \dist(\phi,H^{\infty}+ \mathcal{C})
	\end{equation}
	for the essential norm of $\HHH_{\phi}$, that is, the distance to the compact operators \cite[1.5.3]{Peller}.  Here $H^{\infty}+\mathcal{C}$ denotes the uniformly closed
	algebra $\{ h + f : h \in H^{\infty}, f \in \mathcal{C}\}$ endowed with the supremum norm.  	

The  \emph{Toeplitz operator}  $T_{\varphi}$ with symbol $\phi \in L^{\infty}$ is defined to be
$$T_{\varphi}: H^2 \to H^2, \quad T_{\varphi} f = P_{+}(\varphi f),$$
where $P_{+}$ is the orthogonal projection of $L^2$ onto $H^2$. 	
	
	The following two results play a key role. The first one is an immediate consequence of~\cite[Thm.~7.5.5]{Peller} and its proof.
 
	\begin{Lemma}\label{Lemma:Unimodular}
		Let $u\in L^\infty$.
		
		\begin{enumerate}
		\item
		If $\|\HHH_u\|_e<\|\HHH_u\|$ and $\|u\|_\infty=\dist(u, H^\infty)$, then $u$ has constant absolute value almost everywhere.
		\item
		If  $u$ has constant absolute value almost everywhere, 
		the Toeplitz operator $T_u$ is Fredholm, and $\ind T_u>0$, then
		$$\|u\|_\infty=\dist(u, H^\infty).$$
		\end{enumerate}
	\end{Lemma}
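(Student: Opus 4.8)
The plan is to prove the two parts by different, essentially self-contained routes, both of which are what underlies \cite[Thm.~7.5.5]{Peller}: part~(1) rests on the existence of a maximizing vector forcing the symbol to have constant modulus, while part~(2) rests on the Toeplitz--Hankel identity for unimodular symbols together with the Fredholm index.

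For part~(1), write $s_0 := \norm{\HHH_u}$. The hypothesis $\norm{\HHH_u}_e < \norm{\HHH_u}$ means that the essential spectrum of the positive self-adjoint operator $\HHH_u^*\HHH_u$ lies in $[0,\norm{\HHH_u}_e^2]$, so $s_0^2 = \norm{\HHH_u^*\HHH_u}$ is an isolated eigenvalue of finite multiplicity and is therefore attained: there is a unit vector $\xi\in H^2$ with $\norm{\HHH_u\xi}=s_0$. The hypothesis $\norm{u}_\infty = \dist(u,H^\infty)$, read through \eqref{norm-Hankel-dist}, says that $g_0=0$ is a best $H^\infty$-approximant of $u$ and that $s_0 = \norm{u}_\infty$. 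Now run the chain
$$s_0 = \norm{\HHH_u\xi} = \norm{P_-(u\xi)} \le \norm{u\xi}_2 \le \norm{u}_\infty\norm{\xi}_2 = s_0 .$$
Equality throughout forces $|u\xi| = \norm{u}_\infty\,|\xi|$ a.e.; since $\xi$ is a nonzero $H^2$ function it is nonzero almost everywhere, so $|u| = \norm{u}_\infty = s_0$ a.e., i.e.\ $u$ has constant absolute value.

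For part~(2), after dividing by the (nonzero) constant modulus we may assume $u$ is unimodular with $\norm{u}_\infty = 1$; scaling by a nonzero constant preserves the Fredholmness and index of $T_u$ and preserves the desired equality. Applying $P_+$ to the pointwise identity $\bar u\,(uf)=f$ and splitting $uf = T_u f + \HHH_u f$ (together with $P_+(\bar u g)=\HHH_u^* g$ for $g\in H^2_-$) yields the operator identity
$$T_{\bar u}T_u = I - \HHH_u^*\HHH_u .$$
Suppose toward a contradiction that $\dist(u,H^\infty)=\norm{\HHH_u}<1$. Then $\norm{\HHH_u^*\HHH_u}<1$, so $T_{\bar u}T_u \ge (1-\norm{\HHH_u}^2)I$ is bounded below and $T_u$ is injective. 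But $T_u$ is Fredholm with $\ind T_u>0$, so $\dim\ker T_u - \dim\ker T_u^* > 0$ forces $\dim\ker T_u\ge 1$, contradicting injectivity. Hence $\norm{\HHH_u}\ge 1$, and since always $\norm{\HHH_u}=\dist(u,H^\infty)\le\norm{u}_\infty=1$, we conclude $\dist(u,H^\infty)=\norm{u}_\infty$.

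The only non-formal ingredient, and the place where the hypotheses do real work, is the existence of the maximizing vector $\xi$ in part~(1): this is exactly where the strict inequality $\norm{\HHH_u}_e < \norm{\HHH_u}$ is needed, since otherwise $s_0$ could lie in the essential spectrum, fail to be attained, and the constant-modulus conclusion would fail. Everything else is routine bookkeeping with the projections $P_\pm$ and the standard Toeplitz--Hankel identities, so I expect no further obstacle.
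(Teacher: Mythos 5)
Your proof is correct, and it is worth pointing out that the paper itself gives no argument for this lemma: it is stated as an immediate consequence of Peller's Theorem~7.5.5 (the characterization of badly approximable functions under the hypothesis $\norm{\HHH_u}_e < \norm{\HHH_u}$) together with its proof. What you have produced is a self-contained reconstruction of the two halves of that machinery, and both halves check out. In part~(1), the key step is exactly right: since the image of $\HHH_u^*\HHH_u$ in the Calkin algebra has norm $\norm{\HHH_u}_e^2$ (by the C*-identity), the strict inequality places $s_0^2$ outside the essential spectrum, so it is an isolated eigenvalue of finite multiplicity, the norm is attained, and the equality chain $s_0 = \norm{P_-(u\xi)}_2 \le \norm{u\xi}_2 \le \norm{u}_\infty$ collapses; the conclusion then legitimately uses that a nonzero $H^2$ function has nonvanishing boundary values almost everywhere. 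In part~(2), your route via the identity $T_{\bar u}T_u = I - \HHH_u^*\HHH_u$ for unimodular $u$ (whose verification via $\HHH_u^*g = P_+(\bar u g)$ is correct) plus the index obstruction is cleaner and more elementary than invoking the full badly-approximable characterization: if $\dist(u,H^\infty)=\norm{\HHH_u}<1$ then $T_u$ is bounded below, hence injective, which is incompatible with $\ind T_u>0$; combined with the trivial bound $\dist(u,H^\infty)\le\norm{u}_\infty$ this closes the argument. The only point you should make fully explicit rather than parenthetical is that the constant modulus in part~(2) is necessarily nonzero, since $T_u=0$ is not Fredholm on the infinite-dimensional space $H^2$, so the rescaling to a unimodular symbol is always permissible.
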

	
The second one is from \cite{MR0259639} (see also \cite[Cor.~3.1.16]{Peller}). 

	\begin{Lemma}\label{Peller-2}
	Let $u$ be a unimodular function on $\mathbb{T}$. Then $T_u$ is Fredholm if and only if $\|\HHH_u\|_e<1$ and $\|\HHH_{\overline{u}}\|_e<1$.
	\end{Lemma}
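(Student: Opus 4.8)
The plan is to reduce the Fredholmness of $T_u$ to the two essential-norm conditions by exploiting the classical algebraic identities linking Toeplitz and Hankel operators, which become especially clean when $u$ is unimodular. First I would record the general relation
\begin{equation*}
T_{\phi\psi} - T_\phi T_\psi = \HHH_{\overline\phi}^{\,*}\,\HHH_\psi, \qquad \phi,\psi \in L^\infty,
\end{equation*}
which follows from the decomposition $\phi f = T_\phi f + \HHH_\phi f$ together with the observation that $h \mapsto P_+(\phi h)$ for $h \in H^2_-$ coincides with $\HHH_{\overline\phi}^{\,*}$. Taking $\phi = \overline u$, $\psi = u$ and then $\phi = u$, $\psi = \overline u$, and using $u\overline u = 1$ and $T_{\overline u} = T_u^*$, this specializes to
\begin{equation*}
T_u^* T_u = I - \HHH_u^{\,*}\HHH_u, \qquad T_u T_u^* = I - \HHH_{\overline u}^{\,*}\HHH_{\overline u}.
\end{equation*}

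Next I would pass to the Calkin algebra $\mathcal{Q} := \mathcal{B}(H^2)/\mathcal{K}(H^2)$, writing $\pi$ for the quotient map, so that $\norm{\pi(A)} = \norm{A}_e$. Since $u$ is unimodular, multiplication by $u$ is isometric on $L^2$, whence $\norm{\HHH_u} = \dist(u,H^\infty) \le \norm{u}_\infty = 1$ by \eqref{norm-Hankel-dist}, and likewise $\norm{\HHH_{\overline u}} \le 1$; consequently $0 \le \HHH_u^{\,*}\HHH_u \le I$ and $0 \le \HHH_{\overline u}^{\,*}\HHH_{\overline u} \le I$. Applying $\pi$ to the two identities gives
\begin{equation*}
\pi(T_u)^*\pi(T_u) = \pi(I) - p, \qquad \pi(T_u)\pi(T_u)^* = \pi(I) - q,
\end{equation*}
where $p := \pi(\HHH_u^{\,*}\HHH_u)$ and $q := \pi(\HHH_{\overline u}^{\,*}\HHH_{\overline u})$ are positive elements of $\mathcal{Q}$ with $0 \le p \le \pi(I)$, $0 \le q \le \pi(I)$, $\norm{p} = \norm{\HHH_u}_e^2$ and $\norm{q} = \norm{\HHH_{\overline u}}_e^2$.

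Now $T_u$ is Fredholm exactly when $\pi(T_u)$ is invertible in $\mathcal{Q}$, and in a unital C*-algebra an element $a$ is invertible if and only if both $a^*a$ and $aa^*$ are invertible. Thus $T_u$ is Fredholm iff $\pi(I) - p$ and $\pi(I) - q$ are both invertible. Because $p \ge 0$ and $p \le \pi(I)$ we have $\sigma(p) \subseteq [0,1]$ with $\max\sigma(p) = \norm{p}$, so $\pi(I) - p$ is invertible iff $1 \notin \sigma(p)$ iff $\norm{p} < 1$, that is iff $\norm{\HHH_u}_e < 1$; symmetrically $\pi(I) - q$ is invertible iff $\norm{\HHH_{\overline u}}_e < 1$. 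Combining these equivalences gives the lemma.

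The one genuinely delicate point is the forward direction. Fredholmness of $T_u$ yields only that $\pi(I) - p$ is invertible, i.e. that $1 \notin \sigma_{\mathrm{ess}}(\HHH_u^{\,*}\HHH_u)$, which a priori is weaker than the strict bound $\norm{\HHH_u}_e < 1$ on the top of that spectrum. The gap is closed precisely by the unimodularity of $u$: it forces $\sigma_{\mathrm{ess}}(\HHH_u^{\,*}\HHH_u) \subseteq [0,1]$, and only under this containment does ``$1$ not in the essential spectrum'' upgrade to ``$\norm{\HHH_u}_e < 1$''. I would therefore be careful to invoke $\norm{\HHH_u} \le 1$ at exactly this step. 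The remaining work is routine bookkeeping: pinning down the adjoint in the Toeplitz--Hankel identity and citing the C*-algebra criterion that invertibility of $a$ is equivalent to that of $a^*a$ and $aa^*$.
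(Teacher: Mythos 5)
Your proof is correct and complete. There is, however, nothing in the paper to compare it against: the authors do not prove this lemma, they simply quote it from \cite{MR0259639} (see also \cite[Cor.~3.1.16]{Peller}). What you have written is essentially the standard proof of that cited result: the identity $T_{\phi\psi}-T_\phi T_\psi=\HHH_{\overline{\phi}}^{\,*}\HHH_\psi$, specialized for unimodular $u$ to $T_u^*T_u=I-\HHH_u^{\,*}\HHH_u$ and $T_uT_u^*=I-\HHH_{\overline{u}}^{\,*}\HHH_{\overline{u}}$, followed by Atkinson's theorem and the positivity/spectral argument in the Calkin algebra. You also correctly isolate the one step where unimodularity is indispensable: it gives $\norm{\HHH_u}\le 1$, hence $\sigma(p)\subseteq[0,1]$, and only then does invertibility of $\pi(I)-p$ (i.e.\ $1\notin\sigma(p)$) upgrade to the strict bound $\norm{p}<1$. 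One minor technical point to tidy up: since $\HHH_u$ maps $H^2$ into $H^2_-$, the symbol $\pi(\HHH_u)$ is not literally an element of $\mathcal{B}(H^2)/\mathcal{K}(H^2)$, so to justify $\norm{p}=\norm{\HHH_u}_e^2$ you should either invoke the identity $\norm{A^*A}_e=\norm{A}_e^2$ for operators between (possibly different) Hilbert spaces, or run the computation in the Calkin algebra of $\mathcal{B}(H^2\oplus H^2_-)$ with $\HHH_u$ sitting as an off-diagonal block; either repair is routine and does not affect the argument.
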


\subsection{Two results on harmonic conjugates}
	In addition to the preceding results on Hankel operators, we also need a few classical results 
	from harmonic analysis.  Recall from \eqref{eq:definition of lambda_n with distance} that $\widetilde f$ is the harmonic conjugate of $f \in L^1$.
	Proofs of the following well-known result of Zygmund can be found in the standard texts
	\cite[V.D]{Koo} and \cite[Corollary III.2.6]{Ga}.
	
	\begin{Lemma}\label{le:zygmund}
		For each $\lambda<1$ there is a constant $C_\lambda>0$ such that if $f$ is real valued and 
		$\|f\|_\infty\leq \pi/2$ then  
		\begin{equation}\label{eq:zygmund1}
			\frac{1}{2\pi}\int_{-\pi}^\pi e^{\lambda |\widetilde f(e^{i \theta})|} d\theta \le C_\lambda.
		\end{equation}
		If $f$ is continuous, then for any  $\mu>0$ there exists a constant $C_{f,\mu}>0$ such that
		\begin{equation}\label{eq:zygmund2}
			\frac{1}{2\pi}\int_{-\pi}^\pi e^{\mu |\widetilde f(e^{i \theta})|} d\theta \le C_{f,\mu}.
		\end{equation}
	\end{Lemma}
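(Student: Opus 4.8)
The plan is to establish the first, quantitative estimate \eqref{eq:zygmund1} by a complex-analytic argument on $\D$, and then deduce the second, qualitative estimate \eqref{eq:zygmund2} by a routine approximation. For \eqref{eq:zygmund1} I would first pass from the boundary to the interior. Let $u$ be the Poisson extension of $f$ to $\D$ and let $\widetilde u$ be its harmonic conjugate normalized by $\widetilde u(0)=0$, so that $F:=u+i\widetilde u$ is analytic on $\D$ and $\widetilde u(re^{i\theta})\to\widetilde f(e^{i\theta})$ a.e.\ as $r\to1^-$. The maximum principle gives $|u|\le\|f\|_\infty\le\pi/2$ throughout $\D$, and the key observation is that for $\lambda<1$ one has $|\lambda u|\le\lambda\pi/2<\pi/2$, so $\cos(\lambda u)$ is bounded below by the strictly positive constant $\cos(\lambda\pi/2)$.

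The main computation is to consider the two bounded analytic functions $e^{\pm i\lambda F}$. Writing $e^{-i\lambda F}=e^{\lambda\widetilde u}e^{-i\lambda u}$ and $e^{i\lambda F}=e^{-\lambda\widetilde u}e^{i\lambda u}$ and taking real parts, their sum equals $2\cosh(\lambda\widetilde u)\cos(\lambda u)$. Since $\cosh(\lambda\widetilde u)\ge\tfrac12 e^{\lambda|\widetilde u|}$ and $\cos(\lambda u)\ge\cos(\lambda\pi/2)>0$, I obtain the pointwise inequality
$$\cos(\lambda\pi/2)\,e^{\lambda|\widetilde u|}\le \operatorname{Re}\bigl(e^{-i\lambda F}+e^{i\lambda F}\bigr)$$
on $\D$. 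The right-hand side is harmonic, so the mean value property on the circle of radius $r$ evaluates its average at the origin, where $\widetilde u(0)=0$ forces $F(0)=u(0)$ to be real and the average equals $2\cos(\lambda u(0))\le 2$. Hence
$$\frac{\cos(\lambda\pi/2)}{2\pi}\int_{-\pi}^\pi e^{\lambda|\widetilde u(re^{i\theta})|}\,d\theta\le 2$$
for every $r<1$; letting $r\to1^-$ and invoking Fatou's lemma yields \eqref{eq:zygmund1} with $C_\lambda=2/\cos(\lambda\pi/2)$.

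For \eqref{eq:zygmund2}, fix $\mu>0$, a continuous $f$, and a constant $\lambda<1$ (say $\lambda=\tfrac12$). Since trigonometric polynomials are dense in $\mathcal C$, I write $f=p+g$ with $p$ a trigonometric polynomial and $\|g\|_\infty<\varepsilon$, where $\varepsilon$ is chosen small enough that $2\mu\varepsilon\le\pi\lambda$. The conjugate of a trigonometric polynomial is again bounded, so $\|\widetilde p\|_\infty=:M<\infty$, and from $\widetilde f=\widetilde p+\widetilde g$ we get $e^{\mu|\widetilde f|}\le e^{\mu M}e^{\mu|\widetilde g|}$. Applying \eqref{eq:zygmund1} to the rescaled function $h:=(\mu/\lambda)g$, which satisfies $\|h\|_\infty\le\pi/2$ by the choice of $\varepsilon$ and obeys $\lambda|\widetilde h|=\mu|\widetilde g|$, bounds $\tfrac1{2\pi}\int_{-\pi}^\pi e^{\mu|\widetilde g|}\,d\theta=\tfrac1{2\pi}\int_{-\pi}^\pi e^{\lambda|\widetilde h|}\,d\theta$ by $C_\lambda$. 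Combining gives \eqref{eq:zygmund2} with $C_{f,\mu}=e^{\mu M}C_\lambda$.

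I expect the one genuinely clever step to be the passage through $e^{\pm i\lambda F}$ together with the sign condition $\cos(\lambda u)>0$: everything hinges on the hypothesis $\|f\|_\infty\le\pi/2$ and the strict inequality $\lambda<1$, which jointly force the crucial positivity that turns the sum of the two analytic functions into a usable harmonic majorant for $e^{\lambda|\widetilde u|}$. By contrast, the approximation argument for the continuous case is routine, its only subtlety being the correct rescaling needed to match the prescribed exponent $\mu$ to an admissible pair $(\lambda,h)$ with $\lambda<1$ and $\|h\|_\infty\le\pi/2$.
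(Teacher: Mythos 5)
Your proof is correct, and it is essentially the classical Zygmund argument: the paper itself gives no proof of this lemma but cites \cite[V.D]{Koo} and \cite[Corollary III.2.6]{Ga}, and those references prove \eqref{eq:zygmund1} exactly as you do (via the analytic functions $e^{\pm i\lambda F}$, positivity of $\cos(\lambda u)$, the mean value property, and Fatou's lemma) and deduce \eqref{eq:zygmund2} by the same trigonometric-polynomial approximation and rescaling. No gaps; the only implicit reductions (taking $0<\lambda<1$, since $\lambda\le 0$ is trivial, and taking the approximating polynomial $p$ real-valued) are harmless.
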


	Given a real-valued function $f \in L^{\infty}$, we apply \eqref{eq:zygmund1}  and 
	Markov's inequality 
	$$\sigma(|g| \geq \epsilon) \leq \frac{1}{\epsilon} \int |g| \, d \sigma, \quad \epsilon > 0,$$
	for a positive measure $\sigma$, 
	to the function $$\frac{\pi f}{2 \norm{f}_{\infty}}$$ to obtain
	\begin{equation*}
		\left|\left\{\theta: |\widetilde f(e^{i \theta})|> \frac{2\|f\|_\infty}{\lambda\pi} \log t\right\}\right|
		\leq \frac{C_\lambda}{t}.
	\end{equation*}
	If $\norm{f}_\infty< \frac{1}{2}$, we may choose $\lambda<1$ such that $$\alpha:=\frac{2 \|f\|_\infty}{\lambda}<1.$$
	At this point, after some rewriting, we get
	\begin{equation}\label{eq:weak}
		\left|\{\theta: |\widetilde f(e^{i \theta})|>y\}\right|\leq C e^{-\frac{\pi y}{\alpha}},
	\end{equation}
	where $C$ is a constant which is independent of $y>0$. 
	Similarly, if $f$ is continuous, we obtain from~\eqref{eq:zygmund2} that for any $\mu>0$ there exists a $C_\mu > 0$ such that
	\begin{equation}\label{eq:weak2}
		\left|\{\theta: |\widetilde f(e^{i \theta})|>y\}\right|\leq C_\mu e^{-\mu y}.
	\end{equation}

	This next result of Stein and Weiss is an exact formula
	for the distribution function for $\widetilde \chi_E$ \cite{SW}. 
	
	\begin{Lemma}\label{le:stein-weiss}
		If $E$ is a Lebesgue measurable subset of $\T$ then
		\begin{equation*}
			\big|\{\theta: |\widetilde\chi_E(e^{i \theta})|>y\}\big|=4 \tan^{-1} \left(\frac{2\sin\frac{|E|}{2}}{e^{\pi y}-e^{-\pi y}}  \right).
		\end{equation*}
	\end{Lemma}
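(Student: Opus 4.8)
The plan is to recognize $\widetilde\chi_E$ as a boundary trace of an analytic function and to convert the distribution function into a conformally invariant harmonic measure. Let $u$ denote the Poisson extension of $\chi_E$ to $\D$ and let $v$ be its harmonic conjugate normalized by $v(0)=0$, so that $f:=u+iv$ is analytic on $\D$ with nontangential boundary values satisfying $\operatorname{Re}f=\chi_E$ and $\operatorname{Im}f=\widetilde\chi_E$ a.e.\ on $\T$. Since $\chi_E$ takes only the values $0$ and $1$ and $0<|E|<2\pi$ (the endpoint cases being trivial and already consistent with the asserted formula), the maximum principle forces $f(\D)\subset S:=\{w:0<\operatorname{Re}w<1\}$, while the mean value property gives $f(0)=\tfrac{1}{2\pi}\int_\T\chi_E=\tfrac{|E|}{2\pi}=:a\in(0,1)$.

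Next I would push everything to the upper half-plane $\bbH$ through the conformal map $\Phi(w)=e^{i\pi w}$, which carries $S$ onto $\bbH$, the edge $\{\operatorname{Re}w=0\}$ onto the positive real axis, and the edge $\{\operatorname{Re}w=1\}$ onto the negative real axis. For $g:=\Phi\circ f$ the boundary values are real a.e., being $g=e^{-\pi\widetilde\chi_E}>0$ on $E^c$ and $g=-e^{-\pi\widetilde\chi_E}<0$ on $E$. Writing $r=e^{-\pi y}$ and $R=e^{\pi y}$, a direct check shows that $g$ carries $\{|\widetilde\chi_E|>y\}$ exactly onto $A:=\bbR\setminus\big([-R,-r]\cup[r,R]\big)$. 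Because $h\circ g$ is bounded and harmonic on $\D$ for every bounded harmonic $h$ on $\bbH$, evaluating at the origin gives
$$\tfrac{1}{2\pi}\big|\{|\widetilde\chi_E|>y\}\big|=\omega\big(g(0),A,\bbH\big),\qquad g(0)=\Phi(a)=e^{i\pi a}.$$
Since only $a=|E|/(2\pi)$ enters here, this already explains the remarkable fact that the distribution of $\widetilde\chi_E$ depends on $E$ solely through $|E|$.

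It remains to evaluate the harmonic measure from $\zeta_0=e^{i\pi a}=\cos\pi a+i\sin\pi a$ of the complement of the two intervals $[-R,-r]$ and $[r,R]$. Using the half-plane formula $\omega(\xi+i\eta,[c,d],\bbH)=\tfrac{1}{\pi}\big(\arctan\tfrac{\xi-c}{\eta}-\arctan\tfrac{\xi-d}{\eta}\big)$ and the identity $rR=1$, I would combine the four resulting arctangents in pairs through the addition formula $\arctan A-\arctan B=\arctan\tfrac{A-B}{1+AB}$ (modulo $\pi$). The one genuinely delicate point is the branch bookkeeping: for the interval $[-R,-r]$ the denominator $1+AB$ is negative, which forces a $+\pi$ correction. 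Once this is accounted for, the terms collapse to $\omega(\zeta_0,A,\bbH)=\tfrac{2}{\pi}\arctan\big(\tfrac{2r\sin\pi a}{1-r^2}\big)$. Finally the elementary simplification $\tfrac{2r}{1-r^2}=\tfrac{2}{e^{\pi y}-e^{-\pi y}}$ together with $\pi a=|E|/2$ turns this into $\tfrac{2}{\pi}\arctan\big(\tfrac{2\sin(|E|/2)}{e^{\pi y}-e^{-\pi y}}\big)$; multiplying by $2\pi$ yields the stated formula. The main obstacle is thus computational rather than conceptual, lying entirely in this branch-tracking and trigonometric reduction.
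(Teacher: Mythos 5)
Your proposal is correct, but there is no internal proof to compare it with: the paper states Lemma~\ref{le:stein-weiss} as a known theorem of Stein and Weiss and gives only the citation \cite{SW}. Your conformal--mapping/harmonic--measure argument is a legitimate self-contained derivation (and is in the spirit of the classical proofs of this formula); a side benefit, which you rightly point out, is that it makes transparent why the distribution of $\widetilde\chi_E$ depends on $E$ only through $|E|$. I checked the endgame independently: writing $\xi=\cos\frac{|E|}{2}$, $\eta=\sin\frac{|E|}{2}$, $r=e^{-\pi y}$, $R=e^{\pi y}$, one has
\begin{equation*}
\omega\bigl(\xi+i\eta,\,A,\,\bbH\bigr)
=1-\tfrac{1}{\pi}\Bigl[\arctan\tfrac{R-\xi}{\eta}+\arctan\tfrac{R+\xi}{\eta}
-\arctan\tfrac{r-\xi}{\eta}-\arctan\tfrac{r+\xi}{\eta}\Bigr],
\end{equation*}
and pairing symmetrically, $\arctan\tfrac{R-\xi}{\eta}+\arctan\tfrac{R+\xi}{\eta}=\pi-\arctan\tfrac{2R\eta}{R^{2}-1}$ while $\arctan\tfrac{r-\xi}{\eta}+\arctan\tfrac{r+\xi}{\eta}=\arctan\tfrac{2r\eta}{1-r^{2}}$; the relation $rR=1$ makes the two arctangents equal, giving $\tfrac{2}{\pi}\arctan\tfrac{2\eta r}{1-r^{2}}$ and hence the stated formula. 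Note that the $+\pi$ correction actually arises in the pairing of $\tfrac{R-\xi}{\eta}$ with $\tfrac{R+\xi}{\eta}$ (their angle sum lies in $(\pi/2,\pi)$), not ``for the interval $[-R,-r]$'' as you describe --- with your within-interval pairing the sign of $1+AB$ depends on $\xi$, so your bookkeeping as stated is not quite right, though the final answer is.

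Two points you should make explicit to close the argument. First, the identity $\tfrac{1}{2\pi}\bigl|\{|\widetilde\chi_E|>y\}\bigr|=\omega(g(0),A,\bbH)$ requires that the nontangential boundary values of $h\circ g$, where $h=\omega(\cdot,A,\bbH)$, equal $\chi_A\circ g^{*}$ almost everywhere; this uses the continuity of $h$ on $\bbH^{-}$ away from the four endpoints $\pm r,\pm R$, together with the fact that $\{\widetilde\chi_E=\pm y\}=\{g^{*}\in\{\pm r,\pm R\}\}$ has measure zero. The latter is not automatic: it holds because $g$ maps $\D$ into $\bbH$ and is nonconstant, hence is of bounded type, so $g-c$ cannot have vanishing nontangential limits on a set of positive measure (Privalov's uniqueness theorem). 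Second, the ``evaluation at the origin'' is Fatou's theorem: the bounded harmonic function $h\circ g$ on $\D$ is the Poisson integral of its boundary values, so its value at $0$ is their mean. With these two remarks added, your proof is complete.
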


\subsection{An essential computation}

	Putting this all together, we are now ready to prove the following important lemma which is useful in our analysis of a certain family of Hankel operators.
	It is likely that this next result is already well-known as a `folk theorem', although we
	are unable to find a specific reference for it.
	
	\begin{Lemma}\label{Lemma:HC}
		If $E$ is a Lebesgue measurable subset of $\T$ and $|E| \in (0, 2 \pi)$, then
		\begin{equation*}
			\dist(\chi_E, H^\infty+\mathcal{C})= \tfrac{1}{2}.
		\end{equation*}
	\end{Lemma}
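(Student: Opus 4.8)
The plan is to prove the two inequalities separately, with essentially all the content in the lower bound. The upper bound is immediate: the constant $\tfrac12$ lies in $\mathcal{C}\subset H^\infty+\mathcal{C}$, and $\chi_E-\tfrac12$ takes only the values $\pm\tfrac12$, so
\[
\dist(\chi_E, H^\infty+\mathcal{C})\le \norm{\chi_E-\tfrac12}_\infty=\tfrac12 .
\]
For the reverse inequality I would argue by contradiction using the distribution estimates \eqref{eq:weak}, \eqref{eq:weak2} and the Stein--Weiss formula (Lemma~\ref{le:stein-weiss}). Suppose $\dist(\chi_E,H^\infty+\mathcal{C})<\tfrac12$; then there are $h\in H^\infty$ and $f\in\mathcal{C}$ with $\norm{\chi_E-h-f}_\infty=d<\tfrac12$. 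Set $\rho:=\Re{(h+f)}-\chi_E$, a real-valued function with $\norm{\rho}_\infty\le d<\tfrac12$.

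Rearranging gives $\chi_E=\Re{h}-\rho+\Re{f}$, and since the harmonic conjugate is linear this yields $\widetilde{\chi_E}=\widetilde{\Re{h}}-\widetilde\rho+\widetilde{\Re{f}}$. The first term is bounded: because $h\in H^\infty$, the function $\widetilde{\Re{h}}=\Im{h}-\Im{h(0)}$ lies in $L^\infty$, say $\norm{\widetilde{\Re{h}}}_\infty\le M$. The crucial observation is that $\widetilde\rho$ and $\widetilde{\Re{f}}$ both have much thinner tails than $\widetilde{\chi_E}$. Indeed, applying \eqref{eq:weak} to $\rho$ (legitimate since $\norm{\rho}_\infty<\tfrac12$) produces a constant $\alpha<1$ with $|\{|\widetilde\rho|>y\}|\le Ce^{-\pi y/\alpha}$, while \eqref{eq:weak2} applied to the continuous function $\Re{f}$ gives $|\{|\widetilde{\Re{f}}|>y\}|\le C_\mu e^{-\mu y}$ for every $\mu>0$.

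I would then balance these estimates. Fix $\epsilon\in(0,1-\alpha)$ and $\mu>\pi/\epsilon$. For $y>M$, the identity above forces that wherever $|\widetilde{\chi_E}|>y$ one has $|\widetilde\rho|+|\widetilde{\Re{f}}|>y-M$, hence either $|\widetilde\rho|>(1-\epsilon)(y-M)$ or $|\widetilde{\Re{f}}|>\epsilon(y-M)$ at that point, so that
\begin{equation*}
\big|\{|\widetilde{\chi_E}|>y\}\big|\le \big|\{|\widetilde\rho|>(1-\epsilon)(y-M)\}\big|+\big|\{|\widetilde{\Re{f}}|>\epsilon(y-M)\}\big|\le C'e^{-\beta y}
\end{equation*}
for large $y$, where $\beta=\min\big(\tfrac{\pi(1-\epsilon)}{\alpha},\,\mu\epsilon\big)>\pi$ by the choice of $\epsilon$ and $\mu$. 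On the other hand, Lemma~\ref{le:stein-weiss}, the asymptotic $\tan^{-1}(x)\sim x$ as $x\to0$, and the fact that $\sin\frac{|E|}{2}>0$ (which holds precisely because $0<|E|<2\pi$) give $|\{|\widetilde{\chi_E}|>y\}|\ge c\,e^{-\pi y}$ for some $c>0$ and all large $y$. Since $\beta>\pi$, these two bounds are incompatible as $y\to\infty$, which is the desired contradiction; therefore $\dist(\chi_E,H^\infty+\mathcal{C})\ge\tfrac12$.

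The main obstacle is the balancing step. An even split of the budget $y-M$ between $\widetilde\rho$ and $\widetilde{\Re{f}}$ would only produce the exponent $\tfrac{\pi}{2\alpha}$, which beats $\pi$ only when $\norm{\rho}_\infty<\tfrac14$, whereas the hypothesis supplies merely $\norm{\rho}_\infty<\tfrac12$ and hence $\alpha<1$. The way around this is to exploit the arbitrarily fast tail decay of the continuous part $\widetilde{\Re{f}}$, assigning it only the vanishingly small share $\epsilon(y-M)$ and leaving $(1-\epsilon)(y-M)$ for $\widetilde\rho$; this recovers the exponent $\tfrac{\pi(1-\epsilon)}{\alpha}$, which can be pushed strictly above $\pi$ exactly because $\alpha<1$.
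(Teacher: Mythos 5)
Your proof is correct and follows essentially the same route as the paper: after the trivial upper bound via the constant $\tfrac12$, you argue by contradiction, split $\widetilde{\chi_E}$ into a bounded piece $\widetilde{\Re{h}}$, the conjugate of a real function of norm $<\tfrac12$ (your $\rho$, which is exactly the paper's $\chi_E-u-g$ up to sign), and the conjugate of a continuous function, then play the Zygmund estimates \eqref{eq:weak}, \eqref{eq:weak2} against the Stein--Weiss lower bound. Your asymmetric split with shares $1-\epsilon$ and $\epsilon$ is precisely the paper's choice of $\beta$ with $\beta/\alpha>1$ and $\mu$ with $(1-\beta)\mu>\pi$, so the two arguments coincide.
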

	
	\begin{proof}
		First observe that $\dist(\chi_E, H^\infty+\mathcal{C}) \leq \frac{1}{2}$ since $H^{\infty} + \mathcal{C}$ contains the constant function $\frac{1}{2}$.
		Suppose, toward a contradiction, that there exist functions $h$ in $H^\infty$ and $f \in \mathcal{C}$ 
		such that $\norm{ \chi_E-h-f}_\infty< \frac{1}{2}$.  Writing 
		\begin{equation*}
			h=u+i\widetilde u+ib,
		\end{equation*}
		where $u$ is real-valued and $b$ is a real constant, the inequality
		\begin{equation*}
			|\chi_E - u - \Re f|^2 +  | \widetilde u + b + \Im f|^2 =  |\chi_E-h - f|^2 < \tfrac{1}{4} 
		\end{equation*}
		holds almost everywhere $\T$ whence
		\begin{equation*}
			\norm{ \chi_E-u-  g }_\infty < \tfrac{1}{2},
		\end{equation*}
		where $g= \Re f$.  Letting $M = \norm{ \widetilde{u} }_{\infty}$, it follows that	
		\begin{equation}\label{eq:estimation two terms}
			\begin{split}
				\left|\{\theta: |\widetilde\chi_E(e^{i \theta})|>y\}\right|
				&\,\leq\, \left|\{\theta: |\widetilde\chi_E(e^{i \theta})-\widetilde u(e^{i \theta})-\widetilde g(e^{i \theta})|>\beta(y-M)\}\right|\\ 
				&\qquad+ \left|\{\theta: |\widetilde g(e^{i \theta})|>(1-\beta)(y-M)\}\right|
			\end{split}
		\end{equation}
		for any $0<\beta<1$.
		According to~\eqref{eq:weak}, the first term on the right hand side of \eqref{eq:estimation two terms} is majorized by 
		$$C e^{-\frac{\pi \beta (y - M)}{\alpha}}$$
		for some $\alpha<1$ and $C > 0$. Now choose $\beta<1$ such that $\beta/\alpha>1$, 
		and then $\mu>0$ such that $(1-\beta)\mu >\pi$. Applying~\eqref{eq:weak2}, it follows that the right 
		hand side of~\eqref{eq:estimation two terms} decreases in $y$ at least as fast as $\exp(-a\pi y)$ for some $a>1$, 
		while, by Lemma \ref{le:stein-weiss}, the order of decrease of the left hand side is exactly $\exp(-\pi y)$. 
		This leads to a contradiction if $y\to\infty$ which proves the lemma.
	\end{proof}

	\begin{Lemma}\label{Lemma:Essential}
		If $E$ is a Lebesgue measurable subset of $\T$ and $|E| \in (0, 2 \pi)$, then
		\begin{equation}\label{eq:Essential}
			\norm{ \HHH_{z^n \chi_E} }_e = \tfrac{1}{2}, \quad n \in \Z.
		\end{equation}
	\end{Lemma}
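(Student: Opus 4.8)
The plan is to reduce this to the distance computation just completed in Lemma~\ref{Lemma:HC}, using the essential norm formula \eqref{eq:EssentialDistance}. By that formula,
\[
\norm{ \HHH_{z^n \chi_E} }_e = \dist(z^n \chi_E,\, H^\infty + \mathcal{C}),
\]
so it suffices to show that the right-hand side equals $\dist(\chi_E, H^\infty + \mathcal{C})$, which Lemma~\ref{Lemma:HC} already identifies as $\tfrac12$.

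The key observation I would isolate is that multiplication by $z^n$ leaves $H^\infty + \mathcal{C}$ invariant. Indeed, $H^\infty + \mathcal{C}$ is a uniformly closed algebra, and $z^n$ is a unit in it: for $n \geq 0$ we have $z^n \in H^\infty$ and $z^{-n} = \bar z^n \in \mathcal{C}$ (with the roles of $H^\infty$ and $\mathcal{C}$ interchanged when $n < 0$), so both $z^n$ and its inverse $z^{-n}$ belong to $H^\infty + \mathcal{C}$, and $z^n z^{-n} = 1$. Consequently the map $g \mapsto z^n g$ is a bijection of $H^\infty + \mathcal{C}$ onto itself, with inverse $g \mapsto z^{-n} g$; and since $|z^n| = 1$ almost everywhere on $\T$, this map is an isometry of $L^\infty$.

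Combining these facts, I would write
\[
\dist(z^n \chi_E,\, H^\infty + \mathcal{C}) = \dist\big(z^n \chi_E,\, z^n(H^\infty + \mathcal{C})\big) = \inf_{g \in H^\infty + \mathcal{C}} \norm{ z^n(\chi_E - g) }_\infty = \dist(\chi_E,\, H^\infty + \mathcal{C}),
\]
and the last quantity equals $\tfrac12$ by Lemma~\ref{Lemma:HC}, which yields \eqref{eq:Essential}. I do not anticipate a genuine obstacle here: the analytic content is entirely packaged inside Lemma~\ref{Lemma:HC}, and all that remains is the algebraic bookkeeping. The one step deserving explicit verification is the identity $z^n(H^\infty + \mathcal{C}) = H^\infty + \mathcal{C}$, which rests precisely on $H^\infty + \mathcal{C}$ being closed under multiplication and containing the unimodular unit $z^n$ together with its inverse.
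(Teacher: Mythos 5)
Your proposal is correct, but it takes a different route from the paper. The paper works at the operator level: it observes that the matrix of $\HHH_{z^n \chi_E}$ is obtained from that of $\HHH_{\chi_E}$ by inserting or deleting finitely many columns, which leaves the essential norm unchanged, and then applies \eqref{eq:EssentialDistance} and Lemma~\ref{Lemma:HC} to the symbol $\chi_E$. You instead work entirely at the symbol level: you apply \eqref{eq:EssentialDistance} directly to $z^n\chi_E$ and then show $\dist(z^n\chi_E, H^\infty+\mathcal{C}) = \dist(\chi_E, H^\infty+\mathcal{C})$ because $z^n$ is a unimodular element of the algebra $H^\infty+\mathcal{C}$ whose inverse $z^{-n}$ also lies in the algebra, so that $z^n(H^\infty+\mathcal{C}) = H^\infty+\mathcal{C}$ and multiplication by $z^n$ is an $L^\infty$-isometry. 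Both arguments reduce to the same two ingredients (the essential-norm formula and Lemma~\ref{Lemma:HC}); the trade-off is in what background fact carries the weight. The paper's version silently uses that finite column insertion/deletion is an essentially-norm-preserving operation (a finite-rank perturbation fact), whereas yours leans on Sarason's theorem that $H^\infty+\mathcal{C}$ is a closed subalgebra of $L^\infty$ --- which the paper explicitly records when introducing $H^\infty+\mathcal{C}$, so you are entitled to it. Your argument also makes transparent a slightly more general principle: the distance to $H^\infty+\mathcal{C}$, hence the essential norm of a Hankel operator, is invariant under multiplying the symbol by any unimodular function that is invertible in $H^\infty+\mathcal{C}$. (A minor remark: for $n<0$ the containment $z^n H^\infty \subset H^\infty+\mathcal{C}$ can also be seen bare-handed, by splitting $h \in H^\infty$ as a polynomial of degree less than $|n|$ plus $z^{|n|}h_1$; the algebra property just packages this.)
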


	\begin{proof}
		Since the Hankel matrix $\HHH_{z^n \chi_E}$ is obtained
		from $\HHH_{\chi_E}$ by either eliminating or inserting a finite number of columns, we conclude that 
		$$\norm{\HHH_{z^n \chi_E}}_e = \norm{\HHH_{\chi_E}}_e = \dist(\chi_E, H^\infty+\mathcal{C}) = \tfrac{1}{2}$$
		by \eqref{eq:EssentialDistance} and Lemma \ref{Lemma:HC}.
	\end{proof}
	\vskip .05in

	\begin{Remark}
	\hfill 
	\begin{enumerate}
	\item There is a weaker version  of Lemma \ref{Lemma:HC} in \cite[VII.A.2]{Koosis} namely
	$$\mbox{dist}(\psi, H^{\infty}) = 1,$$
	where $\psi = 1$ on a subset $E \subset \T$ (with $|E| \in (0, 2 \pi)$) and $\psi = -1$ on $\T \setminus E$. 
	\item If $E$ is a finite union of arcs, then another proof of Lemma \ref{Lemma:Essential} becomes available.
	Indeed, recall that if $\psi:\T\to\C$ is piecewise continuous, then \cite{MR628125} (see also \cite[Thm.~1.5.18]{Peller}) asserts that
	\begin{equation*}
		\norm{\HHH_{\psi}}_{e} = \tfrac{1}{2} \max_{\xi \in \T} |\psi(\xi^{+}) - \psi(\xi^{-})|,
	\end{equation*}
	which immediately yields \eqref{eq:Essential}.
	\end{enumerate}	
	\end{Remark}

\subsection{Truncated Toeplitz operators}
	In order to study $\Lambda_n(E)$ for $n \geq 1$, 
	we require a few facts about truncated Toeplitz operators, a class of operators whose study
	was spurred by a seminal paper of Sarason \cite{Sarason} (see \cite{RPTTO} for a current survey of the subject).
	Although much of the following can be phrased in terms of large truncated Toeplitz matrices \cite{Bottcher},
	the arguments involve reproducing kernels and conjugations which are more natural in the setting of truncated 
	Toeplitz operators \cite{NLEPHS, CFT,VPSBF}.  

	For $n \geq 1$, a simple computation with Fourier series shows that 
	$$(z^n H^2)^{\perp} := H^2 \ominus z^n H^2$$ is the finite dimensional vector space of  polynomials of degree at most $n - 1$. For $\psi \in L^{\infty}$ we consider the corresponding
	\emph{truncated Toeplitz operator}
$$A_{n, \psi}: (z^n H^2)^{\perp} \to (z^n H^2)^{\perp}, \quad A_{n, \psi} f = P_{n}(\psi f),$$
	where $P_{n}$ is the orthogonal projection of $L^2$ onto $(z^n H^2)^{\perp}$.   With respect to the  orthonormal basis $\{1, z, z^2, \cdots, z^{n- 1}\}$ for $(z^n H^2)^{\perp}$ the matrix representation of $A_{n, \psi}$ is the Toeplitz matrix $(\widehat{\psi}(j - k))_{0 \leq j, k \leq n  - 1}$.	
	
	It is easy to see that the map $C_n: (z^n H^2)^{\perp} \to (z^n H^2)^{\perp}$, defined in terms of boundary functions by
	\begin{equation*}
		(C_n f)(\zeta) = \overline{f(\zeta)} \zeta^{n - 1}, \quad \zeta \in \T, 
	\end{equation*}
	is a conjugate-linear, isometric, involution (i.e., a \emph{conjugation}).  
	Viewed as a mapping of functions on $\D$, the conjugation $C$ has the explicit form 
	$$C_n \sum_{j = 0}^{n - 1} a_{j} z^j = \sum_{j = 0}^{n - 1} \overline{a_{n - 1 - j}} z^{j}, \quad z \in \D.$$
	Moreover, it is also known that
	$A_{n, \psi}$ satisfies 
	$$A_{n, \psi}^{*} = C_n A_{n, \psi} C_n,$$ (i.e., $A_{n, \psi}$ is a \emph{complex symmetric operator} \cite{G-P, G-P-II}).
	Consequently,
	\begin{equation*}
		\|A_{n, \psi}\| = \max\big\{|\langle A_{n, \psi} f, C_n f\rangle|: \,f \in (z^n H^2)^{\perp}, \,\|f\|_{2} = 1 \big\}.
	\end{equation*}
	See \cite{NLEPHS} for a proof of the preceding result.

%%%%%%%%%%%%%%%%%%%%%%
\section{Evaluation of $\Lambda_{n}(E)$ for $n \leq 0$}\label{Section:Negative}

	It turns out that we can compute  $\Lambda_{n}(E)$ exactly for $n \leq 0$.  In this setting,
	$\Lambda_{n}(E)$ is, to a large extent, independent of $n$ and the set $E$ itself.
	Much of the groundwork for this next result has already been done in 
	 Section \ref{Section:Preliminaries}.
	
	\begin{Theorem}\label{Theorem:Main}
		If $E$ is a Lebesgue measurable subset of $\T$ with $|E| \in (0, 2 \pi)$ then
		\begin{equation*}
			\Lambda_{n}(E) = \tfrac{1}{2}, \quad n \leq 0.
		\end{equation*}
	\end{Theorem}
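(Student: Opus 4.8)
The plan is to pin $\Lambda_n(E)$ down from below by the essential norm of the relevant Hankel operator and from above by exhibiting a single explicit element of $H^\infty$. The crucial structural feature is that for $n \leq 0$ the symbol $\bar z^n = z^{-n}$ is itself analytic, which renders the upper bound completely elementary; it is precisely this feature that fails for $n \geq 1$ and forces the harder conformal-mapping analysis of the later sections.

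First I would recall from \eqref{eq:definition of lambda_n with hankel} that $\Lambda_n(E) = \norm{\HHH_{\bar z^n \chi_E}}$. Since the operator norm always dominates the essential norm, Lemma~\ref{Lemma:Essential} (applied with the integer $-n$, after writing $\bar z^n = z^{-n}$) gives immediately
$$\Lambda_n(E) = \norm{\HHH_{\bar z^n \chi_E}} \geq \norm{\HHH_{\bar z^n \chi_E}}_e = \tfrac12.$$
This supplies the lower bound, and it invokes the hypothesis $|E| \in (0, 2\pi)$ only through Lemma~\ref{Lemma:Essential}.

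For the upper bound I would use the distance formulation \eqref{eq:definition of lambda_n with distance}, namely $\Lambda_n(E) = \dist(\bar z^n \chi_E, H^\infty)$. Because $n \leq 0$, the function $g := \tfrac12 z^{-n}$ lies in $H^\infty$, and
$$\bar z^n \chi_E - g = z^{-n}\bigl(\chi_E - \tfrac12\bigr).$$
Since $|z^{-n}| = 1$ almost everywhere on $\T$ and $\chi_E - \tfrac12$ takes only the values $\pm\tfrac12$, this difference has supremum norm exactly $\tfrac12$, so that $\Lambda_n(E) \leq \tfrac12$. Combining the two inequalities yields the claimed equality.

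There is no serious obstacle remaining at this stage: all the substantive work has already been invested in establishing Lemma~\ref{Lemma:Essential}, which in turn rests on Lemma~\ref{Lemma:HC} and the Stein--Weiss distribution formula. The only point worth emphasizing is that the clean upper bound depends essentially on $z^{-n}$ being analytic for $n \leq 0$; for positive $n$ one cannot subtract off $\tfrac12 z^{-n}$ within $H^\infty$, which is exactly why $\Lambda_n$ ceases to be constant and the more delicate techniques of Sections~\ref{Section:Positive} and~\ref{se:interval} become necessary.
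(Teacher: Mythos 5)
Your proposal is correct and follows essentially the same route as the paper: the lower bound via $\Lambda_n(E)=\norm{\HHH_{\bar z^n\chi_E}}\geq\norm{\HHH_{\bar z^n\chi_E}}_e=\tfrac12$ from Lemma~\ref{Lemma:Essential}, and the upper bound by exploiting the analyticity of $z^{-n}$ for $n\leq 0$ to reduce to the constant $\tfrac12$ (your choice $g=\tfrac12 z^{-n}$ is exactly the competitor the paper's chain of inequalities singles out). No gaps.
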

	
	\begin{proof}
		%Let us first quickly dispatch the endpoint cases.  The case $|E| = 0$ is obviously trivial,
		%whereas the case $|E| = 2\pi$ is a simple consequence of the Cauchy Integral Formula 
		%for $H^1$ functions.  We henceforth assume that $0 < |E| < 2 \pi$.
	
		By duality, note that for any $n \leq 0$ we have
		\begin{align*}
			\Lambda_{n}(E) & = \dist(z^{-n} \chi_E, H^{\infty})\\
			&  = \inf\{\|z^{-n} \chi_{E} - g\|_{\infty}: g \in H^{\infty}\}\\
			& \leq  \inf \{\|z^{- n} \chi_{E} - z^{- n} g\|_{\infty}: g \in H^{\infty}\}\\ 
			& =   \inf \{\|\chi_{E} - g\|_{\infty}: g \in H^{\infty}\}\\ 
			& \leq \tfrac{1}{2}.
		\end{align*}
		The last inequality follows since the constant function $g \equiv \frac{1}{2}$ belongs to $H^{\infty}$.
		To establish the reverse inequality, we observe that
		\begin{equation*}
			\Lambda_{n}(E) = \norm{ \HHH_{z^{-n} \chi_E}} \geq \norm{ \HHH_{z^{-n} \chi_E}}_e = \tfrac{1}{2}
		\end{equation*}
		by Lemma \ref{Lemma:Essential}.
	\end{proof}

	Although Theorem \ref{Theorem:Main} is quite definitive, its proof is not constructive.  It is therefore of
	interest to see if, whenever we are presented with a subset $E$ of $\T$ with $|E| \in (0, 2 \pi)$ and an $\epsilon > 0$, we can explicitly construct a function $F$ in the unit ball of $H^1$ for 
	which the quantity
	\begin{equation*}
		\left|  \frac{1}{2\pi i } \int_E F(\zeta)\zeta^n \,d\zeta  \right|
	\end{equation*}
	comes within $\epsilon$ of $\frac{1}{2}$.
	For general $n$ and $E$, this is most likely an extremely difficult problem.  However,
	in the special case where $n=0$ and $E$ is a finite union of arcs, the following method of S.~Ja.~Khavinson  \cite[p.~18]{Kha-Sr}
	furnishes a relatively explicit sequence of functions for which this occurs.

	Fix $N \geq 1$ and let $E$ be the disjoint union of $N$ open arcs of $\T$, the $j$th arc proceeding counterclockwise from
	$a_j = e^{i\alpha_j}$ to $b_j = e^{i \beta_j}$ so that $0 <\beta_j - \alpha_j < 2 \pi$ and 
	\begin{equation*}
		0 <  \sum_{j=1}^N (\beta_j - \alpha_j)  < 1.
	\end{equation*}
	Let $\chi$ denote the harmonic extension of $\chi_E:\T\to\C$
	to the open unit disk $\D$ and normalize the harmonic conjugate $\widetilde{ \chi }$ of $\chi$ by 
	requiring that $\widetilde{ \chi}(0) = 0$. Next observe that
	\begin{equation*}
		g := \exp\left[  \pi ( - \widetilde{\chi} + i \chi ) \right]
	\end{equation*}
	is an outer function which maps $\D$ onto the open upper half-plane $\mathbb{H}$
	and satisfies $\arg g = \pi \chi_E$ almost everywhere on $\T$.  The function
	\begin{equation*}
		\phi := \frac{g - i}{g + i}
	\end{equation*}
	is therefore inner and satisfies $\phi(E) = \mathbb{H} \cap \T$ \cite{MR2021044}.
	
	Now fix $\epsilon > 0$ and let $\Omega$ denote the rectangle with vertices
	$$\left( \pm \left( \tfrac{1 - 2\epsilon}{4 N} \right), \pm \left( \tfrac{ \epsilon}{2 N} \right) \right).$$
		Letting $\Gamma$ denote the boundary of $\Omega$, oriented in the positive sense,
	we note that the length of $\Gamma$ is $\ell(\Gamma) = \frac{1}{N}$.
	Let $\rho:\D\to\Omega$ be a conformal mapping such that $\mathbb{H} \cap \T$ is mapped
	to the portion of $\Gamma$ running, in the positive sense,
	from $( \frac{1-2\epsilon}{4 N},0)$ to $(- \frac{1-2\epsilon}{4 N},0)$.
	Now define $\psi := \rho \circ  \phi$ and note that $\psi$ maps $\D$ onto $\Omega$
	while sending each of the $N$ arcs of $E$ onto the upper half of $\Gamma$.
	In other words, we have 
	$$\psi(a_j) = \frac{1-2\epsilon}{4 N} \quad \mbox{and}\quad \psi(b_j) = -\frac{1-2\epsilon}{4 N}, \quad j=1,2,\ldots, N.$$
	
	Now define 
	$$F(z) := 2 \pi i \psi'(z)$$ and get 
	\begin{equation*}
		\norm{F}_1 =
		\frac{1}{2\pi} \int_{-\pi}^{\pi} |2 \pi i \psi'(e^{it})|\,dt
		= \int_{-\pi}^{\pi} |\psi'(e^{it})|\,dt = N \ell(\Gamma) = 1
	\end{equation*} along with
	\begin{align*}
		\tfrac{1}{2} - \epsilon
		&= N \left( \frac{1 - 2\epsilon}{2 N} \right) \\
		&= \sum_{i=1}^N [\psi(b_i) - \psi(a_i) ] \\
		&= \sum_{i=1}^N \int_{a_i}^{b_i} \psi'(\zeta)\,d\zeta\\
		&=  \frac{1}{2 \pi i} \int_{\T} \chi_E(\zeta) \underbrace{ (2 \pi i \psi'(\zeta) )}_{F(\zeta)}\,d\zeta.
	\end{align*}

\begin{Remark}
As noted in \cite[VII.A.2]{Koosis} there is a unique solution $g$ to the $H^{\infty}$ distance extremal problem $\mbox{dist}(\chi_{E}, H^{\infty})$. However, there is no maximizing $F$ for the extremal problem $\Lambda_{0}(E)$. Thus approximate solutions $F$ as in the above Khavinson construction is about the best one can do. 
\end{Remark}

\section{Estimating $\Lambda_n(E)$ for $n \geq 1$}\label{Section:Positive}
	Although we were able to explicitly evaluate $\Lambda_{n}(E)$
	for $n \leq 0$, the situation for $\Lambda_n(E)$ with $n \geq 1$ is substantially more
	complicated.  At this point, we are able to provide a variety of general estimates, along with
	explicit evaluations in a few very special cases (see Section \ref{se:interval}). We start with a simple, but relatively crude, estimate.

%Maybe it's worth also stating the next lemma, which is a consequence of \eqref{eq:definition of lambda_n} and Lebesgue's dominated convergence theorem.
%
%\begin{Lemma}\label{le:set monotonicity}
%If $(E_p)_p$ is a monotone sequence of sets tending to $E$, then, for any $n\ge 0$, 
%\[
% \Lambda_n(E)\le \liminf \Lambda_n(E_p).
%\]
%\end{Lemma}
%Equality is not true in general, as is seen, for instance, by applying Proposition~\ref{pr:folk} to a sequence $(E_p)_p$  with $|E_p|>0$, $E_p\downarrow \emptyset$.

	\begin{Theorem}\label{Theorem:Estimate}
		If $E$ is a Lebesgue measurable subset of $\T$ with $|E| \in (0, 2 \pi)$ and $n \geq 1$, then 
		\begin{equation}\label{eq:Estimate}
			\max\big\{ \tfrac{1}{2}, \frac{|E|}{2 \pi} \big\} \leq \Lambda_n(E)<1.
		\end{equation}
		In particular, it follows that
		\begin{equation*}
			\lim_{|E|\to 2\pi} \Lambda_n(E) = 1
		\end{equation*}
		for each fixed $n \geq 1$.
	\end{Theorem}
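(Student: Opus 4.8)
The plan is to treat the displayed bound as three independent pieces, working throughout with the symbol $u:=\bar z^n\chi_E$, so that $\Lambda_n(E)=\|\HHH_u\|=\dist(u,H^\infty)$ by \eqref{eq:definition of lambda_n with hankel} and \eqref{eq:definition of lambda_n with distance}. I would establish the two lower bounds $\tfrac12\le\Lambda_n(E)$ and $\tfrac{|E|}{2\pi}\le\Lambda_n(E)$ by entirely separate arguments (essential norm versus a test function), then prove the strict upper bound $\Lambda_n(E)<1$ by contradiction using the structural lemmas, and finally deduce the limit by squeezing.

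For the first lower bound, observe that $u=\bar z^n\chi_E=z^{-n}\chi_E$, so Lemma~\ref{Lemma:Essential} (applied with $-n$ in place of $n$) gives $\|\HHH_u\|_e=\tfrac12$. Since the operator norm always dominates the essential norm, $\Lambda_n(E)=\|\HHH_u\|\ge\|\HHH_u\|_e=\tfrac12$; this is exactly the mechanism already used in the proof of Theorem~\ref{Theorem:Main}. For the second lower bound I would exhibit an explicit admissible function in \eqref{eq-Lambda}: take $F(\zeta)=\zeta^{n-1}$, which lies in $b(H^1)$ because $n\ge 1$ forces $n-1\ge 0$ and $\|\zeta^{n-1}\|_1=1$. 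A direct parametrization $\zeta=e^{i\theta}$, $d\zeta=i\zeta\,d\theta$, collapses the integrand to $1$ on $\T$, yielding $\frac{1}{2\pi i}\int_E F(\zeta)\bar\zeta^n\,d\zeta=\frac{|E|}{2\pi}$, and hence $\Lambda_n(E)\ge\tfrac{|E|}{2\pi}$.

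The crux, and the step I expect to be the main obstacle, is the \emph{strict} upper bound. The non-strict inequality $\Lambda_n(E)=\dist(u,H^\infty)\le\|u\|_\infty=1$ is immediate (take $g\equiv 0$), so the whole content is ruling out equality. I would argue by contradiction: suppose $\Lambda_n(E)=1$. Then $\|u\|_\infty=1=\dist(u,H^\infty)$, so the norm-attaining hypothesis of Lemma~\ref{Lemma:Unimodular}(1) holds; moreover $\|\HHH_u\|_e=\tfrac12<1=\|\HHH_u\|$, so the spectral-gap hypothesis holds as well. Lemma~\ref{Lemma:Unimodular}(1) then forces $u$ to have constant absolute value almost everywhere. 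But $|u|=|\bar z^n|\,\chi_E=\chi_E$, which equals $1$ on $E$ and $0$ on $\T\setminus E$, and both sets have positive measure precisely because $|E|\in(0,2\pi)$. This contradiction shows $\Lambda_n(E)<1$. The delicate point here is simply recognizing that Lemma~\ref{Lemma:Essential} supplies the essential-norm gap needed to invoke Lemma~\ref{Lemma:Unimodular}(1); once the symbol is seen to be genuinely non-unimodular, strictness follows cleanly.

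Finally, the limiting statement is a squeeze. Combining the pieces gives
\begin{equation*}
	\frac{|E|}{2\pi}\le\Lambda_n(E)<1
\end{equation*}
for every admissible $E$ and every fixed $n\ge 1$. As $|E|\to 2\pi$ the left-hand side tends to $1$ while the right-hand bound is $1$, so $\Lambda_n(E)\to 1$. Note that the lower bound depends on $E$ only through $|E|$, which is what makes the squeeze uniform over all sets of a given measure.
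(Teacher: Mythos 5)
Your proposal is correct and follows essentially the same route as the paper: the test function $F(\zeta)=\zeta^{n-1}$ for the $\frac{|E|}{2\pi}$ bound, and the identical contradiction argument for strictness (essential-norm gap from Lemma~\ref{Lemma:Essential} plus Lemma~\ref{Lemma:Unimodular}(1) forcing $|\bar z^n\chi_E|$ to be constant a.e.). The only cosmetic difference is that for the lower bound $\tfrac12$ the paper cites monotonicity \eqref{increasinginn} together with Theorem~\ref{Theorem:Main}, while you inline the underlying essential-norm mechanism directly, which is the same argument.
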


	\begin{proof}
	From \eqref{increasinginn} we know that 
			$$\Lambda_n(E) \geq \Lambda_0(E) = \tfrac{1}{2}.$$
		 On the other hand, setting $F(z) = z^{n-1}$ in the definition
		\eqref{eq-Lambda} yields $$\Lambda_n(E) \geq \frac{|E|}{2 \pi},$$ which establishes the lower bound
		in \eqref{eq:Estimate}.
		
		To get the upper bound, we  first observe that $\Lambda_n(E) \leq 1$ by definition. Suppose, to get a contradiction,
		that $\Lambda_n(E)=1$.  Since 
		$\Lambda_n(E) = \norm{\HHH_{\bar z^n \chi_E}},$
		it follows from Lemma \ref{Lemma:Essential} that
		$$\|\HHH_{\bar z^n \chi_E}\|_e = \tfrac{1}{2} < 1 = \|\HHH_{\bar z^n \chi_E}\|.$$
		In light of the fact, from \eqref{norm-Hankel-dist}, that $$\norm{\bar z^n \chi_E}_{\infty} = 1 = \|\HHH_{\bar z^n \chi_E}\| = \dist(\bar z^n \chi_E,H^{\infty}),$$
		it follows from Lemma \ref{Lemma:Unimodular} (i) that
		$\bar z^n \chi_E$ has unit absolute value almost everywhere on $\T$, and this is obviously not true. This proves the
		upper bound in \eqref{eq:Estimate}.
	\end{proof}
	
	For $n  \geq 1$, the lower bound in \eqref{eq:Estimate} is somewhat crude.
	The following result is more precise and can be used to obtain numerical estimates of  $\Lambda_n(E)$.

	\begin{Theorem}\label{Theorem:Fejer}
		If $n \geq 1$, then for any $\alpha \in [-\pi,\pi]$ we have
		\begin{equation}\label{eq-Fejer}
			\Lambda_{n}(E) \geq \frac{1}{2\pi}\int_{-\pi}^{\pi} \chi_E(e^{it}) F_n(t-\alpha)\,dt,
		\end{equation}
		where $F_n$ denotes the Fej\'er kernel
		\begin{equation*}
			F_n(x) = \frac{\sin^2 (\frac{nx}{2})}{n\sin^2 (\frac{x}{2})}.
		\end{equation*}
		
	\end{Theorem}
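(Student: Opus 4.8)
The plan is to obtain this lower bound by evaluating the extremal problem at a single, explicitly chosen test function, namely a normalized Fej\'er-kernel polynomial, and exploiting the positivity built into the symbol $\chi_E$.

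First I would record the elementary identity
$$F_n(x) = \frac{1}{n}\Bigl| \sum_{k=0}^{n-1} e^{ikx} \Bigr|^2,$$
which follows from the geometric sum $\sum_{k=0}^{n-1} e^{ikx} = (e^{inx}-1)/(e^{ix}-1)$. Writing $a = e^{i\alpha}$, I then set
$$f(z) = \frac{1}{\sqrt n} \sum_{k=0}^{n-1} \bar a^{\,k} z^k \in (z^nH^2)^{\perp},$$
so that $\|f\|_2 = 1$ and, on $\T$, one has $|f(e^{it})|^2 = \frac1n\bigl|\sum_{k=0}^{n-1} e^{ik(t-\alpha)}\bigr|^2 = F_n(t-\alpha)$.

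Next I would feed $F := f^2$ into the definition \eqref{eq-Lambda}. Since $\|F\|_1 = \|f\|_2^2 = 1$, the function $F$ lies in the unit ball of $H^1$, so $\Lambda_n(E) \ge \bigl|\frac{1}{2\pi i}\int_E F(\zeta)\bar\zeta^n\,d\zeta\bigr|$. Converting the contour integral to an integral in $t$ via $d\zeta = i\zeta\,dt$ turns the right-hand side into $\bigl|\frac{1}{2\pi}\int_{-\pi}^\pi \chi_E(e^{it})\, \bar\zeta^{\,n-1} f(\zeta)^2\,dt\bigr|$. The crux is the boundary identity $\bar\zeta^{\,n-1} f(\zeta)^2 = \bar a^{\,n-1}|f(\zeta)|^2$ on $\T$, which is exactly the statement that $f$ is an eigenvector of the conjugation $C_n$: a short reindexing $m = n-1-k$ gives $(C_n f)(\zeta) = \overline{f(\zeta)}\zeta^{n-1} = a^{\,n-1} f(\zeta)$, whence $\bar\zeta^{\,n-1}f = \bar a^{\,n-1}\overline f$ and $\bar\zeta^{\,n-1}f^2 = \bar a^{\,n-1}|f|^2$. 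Substituting this, using $|a|=1$ together with $F_n \ge 0$, collapses the modulus to $\frac{1}{2\pi}\int_{-\pi}^\pi \chi_E(e^{it}) F_n(t-\alpha)\,dt$, which is the claimed bound; letting $\alpha$ range over $[-\pi,\pi]$ produces the full family.

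There is essentially no analytic obstacle here: the entire content is in selecting the right test function, and the only place to be careful is the bookkeeping, where the factor $\bar\zeta^{\,n-1}$ (rather than $\bar\zeta^{\,n}$) arises from the Jacobian $d\zeta = i\zeta\,dt$. This shift by one is precisely what matches $\deg f \le n-1$ so that the eigenvector relation for $C_n$ applies. Alternatively, I could route the argument through truncated Toeplitz operators: because $\chi_E \ge 0$, the operator $A_{n,\chi_E}$ is positive, and restricting the quadratic problem \eqref{eq-ExtremalQuadratic} to polynomials of degree $\le n-1$ together with the complex-symmetry formula of Section~\ref{Section:Preliminaries} gives $\Lambda_n(E) \ge \|A_{n,\chi_E}\| = \max_{\|g\|_2=1}\langle A_{n,\chi_E} g, g\rangle = \max_{\|g\|_2=1}\frac{1}{2\pi}\int_{-\pi}^\pi \chi_E |g|^2\,dt$; the same $f$ then yields $\frac{1}{2\pi}\int_{-\pi}^\pi \chi_E |f|^2\,dt = \frac{1}{2\pi}\int_{-\pi}^\pi \chi_E F_n(t-\alpha)\,dt$. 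Either route reaches the conclusion, and I expect the direct substitution to be the shortest to write out.
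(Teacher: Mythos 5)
Your proposal is correct, and its primary route is a genuine (if modest) streamlining of the paper's argument rather than a reproduction of it. Both proofs test the problem against the very same function---the normalized reproducing kernel $k_\xi$, $\xi=e^{i\alpha}$, whose boundary modulus squared is exactly $F_n(t-\alpha)$---but the paper reaches the Fej\'er integral through operator-theoretic scaffolding: it invokes the equivalence of \eqref{eq-Lambda} with the quadratic problem \eqref{eq-ExtremalQuadratic} (a cited result from \cite{CFT,NLEPHS}), restricts to $(z^nH^2)^{\perp}$, identifies the restricted supremum with $\norm{A_{n,\chi_E}}$ via the complex-symmetry formula $\norm{A_{n,\psi}}=\max |\inner{A_{n,\psi}f, C_nf}|$, and only then evaluates the quadratic form at $k_\xi$. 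You instead note that $k_\xi^2$ already lies in $b(H^1)$ (so only the trivial inclusion $\{f^2: f\in b(H^2)\}\subset b(H^1)$ is needed, not the cited equivalence), substitute it directly into \eqref{eq-Lambda}, and collapse the modulus using the eigenvector identity $C_nk_\xi=\xi^{\,n-1}k_\xi$, which you verify inline, together with the positivity of $\chi_E F_n$; your bookkeeping of the factor $\overline{\zeta}^{\,n-1}$ coming from $d\zeta = i\zeta\,dt$ is exactly right, and this is indeed the one place where the argument could go wrong. What the paper's longer route buys is the intermediate inequality $\Lambda_n(E)\geq \norm{A_{n,\chi_E}}$, which it immediately reuses (in the Remark following the theorem) to recover $\norm{T_\psi}=\norm{\psi}_\infty$; your direct substitution does not record that fact, although the alternative route you sketch at the end---positivity of $A_{n,\chi_E}$ plus the complex-symmetry norm formula---is essentially the paper's own proof, so you have in effect described both.
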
	

	\begin{proof}
		With $n \geq 1$ and $\alpha \in [-\pi, \pi]$ fixed, let $\xi = e^{i \alpha}$ and let 
		\begin{equation*}
			k_{\xi}(z) = \frac{1}{\sqrt{n}} \sum_{j = 0}^{n - 1} (\overline{\xi} z)^{j}
		\end{equation*}
		denote the corresponding normalized reproducing kernel for $(z^n H^2)^{\perp}$.
		Applying \cite[Thm.~3.1]{CFT} and \cite[Thm.~1]{NLEPHS} we obtain
		\begin{align}
		\Lambda_n(E)
			&=  \sup_{F \in b(H^1)} \left| \frac{1}{2\pi i } \int_{\T} F(\zeta)\chi_E(\zeta) \overline{\zeta}^{n}\,d\zeta \right| \nonumber \\
			&= \sup_{f \in b(H^2)} \left| \frac{1}{2\pi i } \int_{\T}  f(\zeta)^2 \chi_E(\zeta) \overline{\zeta}^{n}\,d\zeta \right| \nonumber \\
			&= \sup_{f \in b(H^2)} | \inner{ \chi_E  f, \overline{f} \zeta^{n-1}} | \nonumber \\
			&\geq \sup_{f \in b((z^n H^2)^{\perp})} | \inner{ \chi_E  f, \overline{f} \zeta^{n-1}} | \nonumber \\
			&= \sup_{f \in b((z^n H^2)^{\perp})} | \inner{ \chi_E  f, P_{n}(\overline{f} \zeta^{n-1})} | \nonumber \\
			&= \sup_{f \in b((z^n H^2)^{\perp})} | \inner{ A_{n, \chi_E} f, C_n f} | \nonumber \\
			&= \norm{A_{n, \chi_E}} \nonumber \\
			&\geq  | \inner{A_{n, \chi_E} k_{\xi}, k_{\xi}} |\nonumber \\
			&\geq \frac{1}{n| \xi |^{n-1}} \left| \frac{1}{2\pi} \int_{-\pi}^{\pi} \chi_E(e^{it})
				\left| \frac{ e^{int} - \xi^n }{ e^{it} - \xi} \right|^2\,dt\right| \nonumber \\
			&= \frac{1}{2 \pi} \int_{-\pi}^{\pi} \chi_E(e^{it})
				\frac{ \sin^2 \left( \frac{n}{2}(t - \alpha) \right) }{n \sin^2 \left(\frac{t-\alpha}{2}\right)}\,dt \nonumber \\
			&= \frac{1}{2\pi}\int_{-\pi}^{\pi} \chi_E(e^{it}) F_n(t-\alpha)\,dt, \label{eq-LLFSB}
		\end{align}
		where we have used the fact that $\chi_E$ and $F_n$ are both nonnegative.
%		Assuming that $|E| > 0$, we know that $\chi_E$ equals $1$ on a subset of $\T$ having positive measure.
%		On the other hand it follows from the Lebesgue differentiation theorem
%		that almost every point of $\T$ is a Lebesgue point of $\chi_E$.  If $\xi = e^{i\alpha}$ is a Lebesgue point
%		of $\chi_E$ for which $\chi_E(\xi) = 1$, then Lebesgue's generalization of Fej\'er's theorem \cite[p.~21]{Katznelson} 
%		implies that
%		\eqref{eq-LLFSB} tends to $1$.  This establishes the desired limiting behavior in \eqref{eq-Ne1}.
	\end{proof}

	%The lower bound \eqref{eq-Fejer} from the preceding theorem is illustrated in
	%Figures \ref{FigureA} and \ref{FigureB}.  These figures indicate that, at least for circular arcs,
	%the quantities $\Lambda_n(E)$ tend to $1$ rapidly as $n$ increases.  
	%\begin{figure}
	%	\begin{center}
	%		\includegraphics[width=0.75\textwidth]{Plot01}
	%	\end{center}
	%	\vspace{-40pt}
	%	Theorem \ref{Theorem:Fejer} for circular arcs
	%	centered at $z=0$ and symmetric with respect to the real line.
	%	The horizontal axis denotes $\frac{1}{2}|E|$ and the vertical axis represents 
	%	the lower bound \eqref{eq-Fejer} obtained by letting $\alpha= 0$.}
	%	\label{FigureA}
	%\end{figure}	
	%
	%\begin{figure}
	%	\begin{center}
	%%%		\includegraphics[width=0.75\textwidth]{Plot02}
	%	\end{center}
	%	\vspace{-40pt}
	%%	\caption{\footnotesize A closer look at the behavior of the lower bound \eqref{eq-Fejer} 
	%	for large values of $n$.
	%	We maintain the same notation and conventions as in Figure \ref{FigureA}.}
	%	\label{FigureB}
	%\end{figure}		

	\begin{Remark}
	Although not directly related to our investigations, it is worth noting that 
	the proof of Theorem \ref{Theorem:Fejer} can be used to obtain the  well-known fact that
		 the norm of the Toeplitz operator $T_{\psi}$ on $H^2$
		is given by $\norm{T_{\psi}}= \norm{\psi}_{\infty}$. 
		Indeed, computations similar to  \eqref{eq-LLFSB} lead to 
		\begin{equation*}
			\| A_{n, \psi} \| \geq  \lim_{n\to\infty} \left| \frac{1}{2\pi}\int_{-\pi}^{\pi} \psi(e^{it}) F_n(t-\alpha)\,dt \right| = | \psi(e^{i\alpha})|
		\end{equation*}
		 for almost every $\alpha$ in $[-\pi,\pi]$.
		Since $A_{n, \psi}$ is a compression of $T_{\psi}$ it follows that
		\begin{equation*}
			\norm{\psi}_{\infty} \geq \norm{ T_{\psi} } \geq \| A_{n, \psi} \|
			\geq  | \psi(\xi)|
		\end{equation*}
		for almost every $\xi$ in $\T$.  Therefore $\norm{T_{\psi}}= \norm{\psi}_{\infty}$, as claimed.
		\end{Remark}
	
%	\begin{proof}
%		The method of the preceding proof shows that
%		\begin{equation*}
%			\| A_{n, \psi} \| \geq  \lim_{n\to\infty} \left| \frac{1}{2\pi}\int_{-\pi}^{\pi} \psi(e^{it}) F_n(t-\alpha)\,dt \right| = | \psi(e^{i\alpha})|
%		\end{equation*}
%		holds for almost every $\alpha$ in $[-\pi,\pi]$.		
%		Since $A_{n, \psi}$ is a compression of $T_{\psi}$ it follows that
%		\begin{equation*}
%			\norm{\psi}_{\infty} \geq \norm{ T_{\psi} } \geq \| A_{n, \psi} \|
%			\geq  | \psi(\xi)|
%		\end{equation*}
%		for almost every $\xi$ in $\T$.  Therefore $\norm{T_{\psi}}= \norm{\psi}_{\infty}$, as claimed.
%	\end{proof}

It is easy to see that for any $\psi\in L^\infty$ we have $\lim_{n\to\infty}\dist(\psi, z^n H^\infty)=\|\psi \|_\infty$. Indeed, assume $\dist(\psi, z^n H^\infty)=\|\psi-z^ng_n\|_\infty$ (as noted in Section~\ref{Section:Preliminaries}, the distance is attained). If  
$\mathfrak{P}(\psi)$ is the Poisson extension of $\psi$ inside $\bbD$ (see, for instance,~\cite{Ga}), then
\[
\sup_{z\in\bbD} |\mathfrak{P}(\psi)(z)-z^n g_n(z)|\le\|\psi-z^n g_n\|_\infty.
\]
Since $\|g_n\|\le 2\|\psi\|$, we have $z^n g_n(z)\to0$ for any $z\in\bbD$, whence 
\[
\lim_{n\to\infty}\sup_{z\in\bbD} |\mathfrak{P}(\psi)(z)-z^n g_n(z)|\ge \sup_{z\in\bbD}|\mathfrak{P}(\psi)(z)|=\|\psi\|_\infty.
\]
In particular, it follows that $\lim_{n\to\infty} \Lambda_n(E) = 1$ whenever $E$ is a Lebesgue measurable subset of $\T$ with $|E| \in (0, 2 \pi]$.
  Under certain circumstances, we can obtain a better estimate for the speed of this convergence.
	
	\begin{Proposition}
		If $E$ is a Lebesgue measurable subset of $\T$ which contains a non-degenerate arc, then
		\begin{equation*}
			1 - \Lambda_n(E) = O(\tfrac{1}{n}), \quad n \to \infty.
		\end{equation*}
	\end{Proposition}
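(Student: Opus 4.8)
The plan is to produce a lower bound of the form $\Lambda_n(E)\ge 1-C/n$ and combine it with the trivial upper bound $\Lambda_n(E)\le\|\bar z^n\chi_E\|_\infty=1$; together these yield $1-\Lambda_n(E)=O(1/n)$. The only tool I need is the Fej\'er kernel estimate of Theorem~\ref{Theorem:Fejer}, which already bounds $\Lambda_n(E)$ below by an average of $\chi_E$ against a translate of the Fej\'er kernel $F_n$. The governing idea is that $F_n$ is a nonnegative approximate identity of total mass one, so if $E$ contains an arc we may center the kernel at the midpoint of that arc and thereby capture almost all of its mass.

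First, using the rotation invariance \eqref{rotation-invariant} of $\Lambda_n$, I would assume without loss of generality that the non-degenerate arc $I\subseteq E$ is centered at $1=e^{i0}$, say $I=\{e^{it}:|t|<\delta\}$ for some $\delta\in(0,\pi]$. Taking $\alpha=0$ in \eqref{eq-Fejer} and using $\chi_E\ge\chi_I$ together with $F_n\ge 0$ gives
\[
\Lambda_n(E)\ \ge\ \frac{1}{2\pi}\int_{-\delta}^{\delta}F_n(t)\,dt\ =\ 1-\frac{1}{2\pi}\int_{\delta\le|t|\le\pi}F_n(t)\,dt,
\]
where the last equality uses $\frac{1}{2\pi}\int_{-\pi}^{\pi}F_n\,dt=1$. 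Thus everything reduces to bounding the tail mass of the Fej\'er kernel away from the origin.

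For the tail, on $[\delta,\pi]$ the function $\sin(t/2)$ is increasing, so $\sin^2(t/2)\ge\sin^2(\delta/2)$, whence $F_n(t)=\sin^2(nt/2)/(n\sin^2(t/2))\le 1/(n\sin^2(\delta/2))$; integrating and using that $F_n$ is even yields
\[
\frac{1}{2\pi}\int_{\delta\le|t|\le\pi}F_n(t)\,dt\ \le\ \frac{1}{n\sin^2(\delta/2)}.
\]
Combining the two displays gives $\Lambda_n(E)\ge 1-\tfrac{1}{n}\csc^2(\delta/2)$, and hence $1-\Lambda_n(E)\le \csc^2(\delta/2)\cdot\tfrac1n=O(1/n)$, as claimed; note that the implied constant depends only on the half-width $\delta$ of the arc. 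I do not anticipate a serious obstacle: the whole argument is a concentration estimate for the Fej\'er kernel, and the only point needing care is the elementary tail bound, which is clean precisely because $\sin(t/2)$ is monotone on $[0,\pi]$.
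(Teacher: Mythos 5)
Your proposal is correct and is essentially the paper's own proof: both reduce by rotation invariance to an arc centered at $1$, apply Theorem~\ref{Theorem:Fejer} with the kernel centered at the arc's midpoint, use that the Fej\'er kernel has unit mass, and bound the tail via $\sin^2(t/2)\ge\sin^2(\delta/2)$ on $[\delta,\pi]$ together with $\sin^2(nt/2)\le 1$. The only difference is the cosmetic form of the final constant (the paper writes $\alpha/(\pi n\sin^2(\alpha/2))$ where you write $\csc^2(\delta/2)/n$), which does not affect the $O(1/n)$ conclusion.
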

	
	\begin{proof}
		Without loss of generality, we may assume that $E$ contains the circular arc
		$I$ from $e^{-i\alpha}$ to $e^{i\alpha}$ where $\alpha \in (0, \pi)$.
		In light of Theorem \ref{Theorem:Fejer} we conclude that
		\begin{align*}
			\Lambda_n(E)
			&\geq \frac{1}{2\pi} \int_{-\alpha}^{\alpha}F_n(x)\,dx \\
			&= 1 - \frac{1}{2\pi} \int_{\alpha \leq |x| \leq \pi}F_n(x)\,dx \\
			&= 1 - \frac{1}{2\pi} \int_{\alpha \leq |x| \leq \pi} \frac{ \sin^2( \frac{nx}{2})}{n \sin^2(\frac{x}{2})}\,dx \\
			&\geq 1 - \frac{\alpha}{\pi n \sin^2 (\frac{\alpha}{2})}.\qedhere
		\end{align*}
	\end{proof}

	\begin{Question}
		Suppose that $E$ is a totally disconnected subset of $\T$ which has positive measure
		(for instance, if $E$ is a `fat Cantor set').  What can be said about the rate at which $\Lambda_n(E)$ tends to $1$?
	\end{Question}

\begin{Example}\label{ExampleRoss}
	We remark that we are free to maximize the lower bound 
	\eqref{eq-Fejer} with respect to the parameter $\alpha \in [-\pi, \pi]$.  If $t  \in (0, \pi)$ and $E_{t}$ denotes the arc of $\T$ from $e^{-i t}$ to $e^{i t}$ then evaluating the right hand side of \eqref{eq-Fejer} when $\alpha = 0$ gives us the integral 
	$$\frac{1}{2 \pi} \int_{-t}^{t} \frac{1}{n} \left(\frac{\sin(n x/2)}{\sin(x/2)}\right)^2 d x.$$ This integral can be computed directly yielding the following lower estimates
	\begin{align*}
		\Lambda_{0}(E_t) &= \tfrac{1}{2}, \\
		\Lambda_{1}(E_t) &\geq \frac{t}{2}, \\
		\Lambda_{2}(E_t) &\geq \frac{t}{2} + \frac{\sin (t)}{\pi},\\
		\Lambda_{3}(E_t) &\geq \frac{t}{2} + \frac{4 \sin (t)+\sin (2 t)}{3 \pi }, \\
		\Lambda_{4}(E_t) &\geq \frac{t}{2} + \frac{3 \sin (t)+\sin (2 t)+\frac{1}{3} \sin (3 t)}{2 \pi }.
	\end{align*}
From the estimate 
$$\max\big\{ \tfrac{1}{2}, \frac{|E_t|}{2 \pi} \big\} \leq \Lambda_n(E_t)$$ in \eqref{eq:Estimate} we observe that the above lower estimates only become meaningful when the right hand sides of the above expressions are greater then $\frac{1}{2}$ (which will happen when $t$ is bounded away from $0$). 
As noted in \eqref{rotation-invariant}, the above estimates hold for any arc of $\T$ with length $2 t$.  
\end{Example}

 \section{The case of the arc}\label{se:interval}

Suppose that $\alpha \in (0, \pi)$ and let $I_{\alpha}=\{e^{it}:t\in(-\alpha, \alpha)\}$. 
We assume $n \geq 1$. 
%By Lemma \ref{Lemma:Essential} we have
%\begin{equation}\label{Hank-e-is-half}
% \|\HHH_{\bar z^n \chi_{I_{\alpha}}} \|_e=\tfrac{1}{2}.
% \end{equation}
%In this section, we intend to show that
%\begin{equation} \label{Hank-is-half}
%\|\HHH_{\bar z^n \chi_{I_{\alpha}}} \|>\tfrac{1}{2}.
%\end{equation} 
%Indeed, one can see some preliminary evidence for this from Example \ref{ExampleRoss}. 
It is known \cite[p.~146]{Koosis} that the infimum in \eqref{eq:definition of lambda_n with distance} is attained. 
We will compute the minimizing function, thus obtaining a formula for $\Lambda_n(I_{\alpha})$ which is explicit when $n = 1$. As noted in \eqref{rotation-invariant}, our formula will hold not only for $I_{\alpha}$ but for any arc of $\T$ with length $2 \alpha$. Moreover, we will see in Section~\ref{se:more general sets} that for $n=1$ it can be extended to any measurable set.

%\subsection{The outline} Before charging headlong into the technical details, let us pause for a moment to give the reader a general outline of the argument. By means of conformal maps, we will produce a $\phi_{n, \alpha} \in H^{\infty}$ such that 
%\begin{equation} \label{phinalpha}
%u_{n, \alpha} := \overline{z}^n \chi_{I_{\alpha}} - \phi_{n, \alpha}
%\end{equation}
% has constant absolute value almost everywhere on $\T$. By  \eqref{eq:definition of lambda_n with distance}, the dual extremal  rephrasing of \eqref{eq-Lambda}, we see that 
%$$\Lambda_{n}(I_{\alpha}) = \mbox{dist}(\overline{z}^n \chi_{I_{\alpha}}, H^{\infty}) = \mbox{dist}(u_{n, \alpha}, H^{\infty}).$$
%From \eqref{norm-Hankel-dist} we get the identity
%$$\mbox{dist}(u_{n, \alpha}, H^{\infty}) = \|\HHH_{u_{n, \alpha}}\|.$$
%
%Further on, we will show that $\|\HHH_{u_{n, \alpha}}\|_e<1$ and $\|\HHH_{\bar u_{n, \alpha}}\|_e<1$, which  by Lemma~\ref{Peller-2} means that the Toeplitz operator $T_{u_{n, \alpha}}$ is Fredholm. Then, once we show that its index is positive, the fact that $u_{n, \alpha}$ has constant absolute value almost everywhere and
% Lemma \ref{Lemma:Unimodular} (ii) will imply
%$$ \|\HHH_{u_{n, \alpha}}\| = \|u_{n, \alpha}\|_{\infty},$$
%which will be the (constant) absolute value of $u_{n, \alpha}$ on $\T$. Thus $\Lambda_{n}(I_{\alpha})$ will be equal to this absolute value. 
%
%This is the basic outline of the argument. We start off in the next section with several conformal mapping techniques to produce the function  $\phi_{n, \alpha}$ from \eqref{phinalpha}.
  
\subsection{Conformal maps} 
We will use the notation 
$$\wideparen{(\zeta, \eta)}$$
for
$\zeta\not= \eta\in\T$ to denote the sub-arc of $\T$  \emph{from} $\zeta$ \emph{to} $\eta$ in the positive direction.
\begin{Remark}
To avoid confusion later on, it is important to take careful note of the direction one traverses the arc $\wideparen{(\zeta,  \eta)}$. One needs to traverse this arc \emph{from} $\zeta$ \emph{to} $\eta$ always keeping $\D$ on the left. For example,
$$\wideparen{(e^{i \pi/4}, e^{-i\pi/4})}$$ is the arc which travels the \emph{long way} around the circle from $e^{i \pi/4}$ to $e^{- i \pi/4}$ while 
$$\wideparen{(e^{-i \pi/4}, e^{i\pi/4})}$$ travels the short way around. 
\end{Remark}

For fixed $n \geq 1$ and $r \in [\frac{1}{2}, 1]$ let $\Onr$ be the domain in $\C$ defined by
\begin{equation*}
	\Onr := 
	\D \setminus \left\{ \Big|z^n-\frac{1}{r} \Big| \leq1, -\frac{\pi}{2n} \leq \arg z \leq \frac{\pi}{2n} \right\}.
\end{equation*}
This domain $\Onr$ is obtained as follows. The pre-image of the closed unit disk centered at $\frac{1}{r}$ via the mapping $z\mapsto  z^n$ has $n$ components (see Figure \ref{FigureX1}).
%\begin{figure}
%		\begin{center}
%			\includegraphics[width=0.65\textwidth]{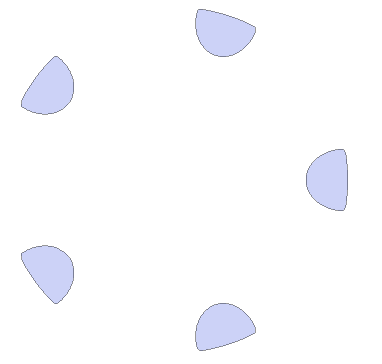}
%		\end{center}
%		\caption{\footnotesize The components (shaded) of the pre-image of the closed unit disk centered at %$4/5$ of the mapping $z \mapsto z^5$.}
%		\label{FigureX1}
%	\end{figure}	
We form $\Onr$ by removing from $\D$ the component containing $1$  (see Figure \ref{FigureX2}). Then $\Onr$ is a simply connected domain which is symmetric with respect to $\R$. For $n \ge 1$ note that $\OOO_{n,1/2}=\D$. Also note that 
\begin{equation} \label{OOO-dec}
\OOO_{n, r'} \subset \OOO_{n, r}, \quad r' > r.
\end{equation}
\begin{figure}
\centering
\begin{subfigure}{.5\textwidth}
  \centering
  \includegraphics[width=.9\linewidth]{five-components}
  \caption{}
  \label{FigureX1}
\end{subfigure}%
\begin{subfigure}{.5\textwidth}
  \centering
  \includegraphics[width=.9\linewidth]{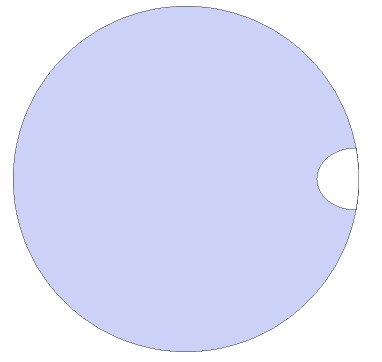}
  \caption{}
  \label{FigureX2}
\end{subfigure}
\caption{{\footnotesize (A) The components (shaded) of the pre-image of the closed unit disk centered at $4/5$ of the mapping $z \mapsto z^5$. (B) The region (shaded) $\OOO_{5, 4/5}$.}}
\end{figure}
%\begin{figure}
%		\begin{center}
%			\includegraphics[width=0.65\textwidth]{bite-region}
%		\end{center}
%		\caption{\footnotesize The region (shaded) $\OOO_{5, 4/5}$.}
%		\label{FigureX2}
%	\end{figure}	
	We let $w^\pm_{n,r}$ be the two `corners' of $\partial\Onr$ characterized by 
$$\partial\Onr\cap \T= \wideparen{(w^+_{n,r}, w^-_{n,r})}.$$ 

\begin{Lemma}\label{le:conformal general n} Suppose $\alpha \in (0, \pi)$. For every $n \geq 1$
there exist an $r_{n,\alpha} \in (\frac{1}{2}, 1)$ and a conformal homeomorphism $$\Phi_{n,\alpha}:\bbD\to\OOO_{n, r_{n,\alpha}}$$
 such that $\Phi_{n,\alpha}(0)=0$ and, denoting $\Phi_{n,\alpha}$ to also be its continuous extension to $\D^{-}$,  we have $\Phi_{n,\alpha}(e^{\pm i\alpha})=w^\pm_{n,r_{n, \alpha}}$.
\end{Lemma}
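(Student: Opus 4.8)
The plan is to obtain $\Phi_{n,\alpha}$ from the Riemann mapping theorem and then to pin down the parameter $r$ by an intermediate value argument applied to the boundary correspondence. First I would fix $n$ and observe that, for each $r\in(\tfrac12,1)$, the domain $\Onr$ is a Jordan domain: its boundary is the union of the long sub-arc $\wideparen{(w^+_{n,r},w^-_{n,r})}$ of $\T$ (the one running through $-1$) and the two circular sub-arcs of $\{|z^n-1/r|=1\}$ bounding the removed \emph{bite}, which meet $\T$ at the corners $w^\pm_{n,r}$. A direct computation on $\T$ shows $w^\pm_{n,r}=e^{\pm i\gamma(r)}$ with $\gamma(r)=\tfrac1n\arccos\tfrac{1}{2r}$, a continuous, strictly increasing function of $r$. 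Since $\Onr$ is simply connected and $\neq\C$, the Riemann mapping theorem gives a unique conformal map $\Phi_r:\D\to\Onr$ with $\Phi_r(0)=0$ and $\Phi_r'(0)>0$. Because $\Onr$ is symmetric about $\R$, uniqueness forces $\Phi_r(\bar z)=\overline{\Phi_r(z)}$: the map $z\mapsto\overline{\Phi_r(\bar z)}$ is again a normalized conformal map onto $\Onr$, hence equals $\Phi_r$. As $\partial\Onr$ is a Jordan curve, Carath\'eodory's theorem extends $\Phi_r$ to a homeomorphism $\D^-\to\Onr^-$, so I may define $\theta(r)\in(0,\pi)$ by $\Phi_r(e^{i\theta(r)})=w^+_{n,r}$; symmetry then gives $\Phi_r(e^{-i\theta(r)})=w^-_{n,r}$.

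Everything reduces to showing that $\theta:(\tfrac12,1)\to(0,\pi)$ is continuous with $\lim_{r\to 1/2^+}\theta(r)=0$ and $\lim_{r\to 1^-}\theta(r)=\pi$, for then the intermediate value theorem produces $r_{n,\alpha}\in(\tfrac12,1)$ with $\theta(r_{n,\alpha})=\alpha$, and $\Phi_{n,\alpha}:=\Phi_{r_{n,\alpha}}$ is the required map. The cleanest way to control $\theta(r)$ is through harmonic measure. Since $\Phi_r(0)=0$ and conformal maps preserve harmonic measure, and since (by symmetry and orientation) the preimage of the $\T$-part $\wideparen{(w^+_{n,r},w^-_{n,r})}$ is the arc $\{e^{i\theta}:\theta(r)\le\theta\le 2\pi-\theta(r)\}$, whose normalized arclength is the harmonic measure of an arc at the center of $\D$, I obtain the clean identity
$$
\theta(r)=\pi\Big(1-\omega\big(0,\ \partial\Onr\cap\T,\ \Onr\big)\Big).
$$

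It remains to analyze the two extremes, which is where the real work lies. As $r\to\tfrac12^+$ the bite shrinks to the single point $1\in\T$, so from the fixed interior point $0$ its harmonic measure tends to $0$; hence $\omega(0,\partial\Onr\cap\T,\Onr)\to 1$ and $\theta(r)\to 0$. As $r\to 1^-$ the inner tip of the bite, located at $z=(1/r-1)^{1/n}$ on the positive real axis, tends to $0=\Phi_r(0)$, so $0$ is pinched within distance tending to $0$ of $\partial\Onr$, while the surviving $\T$-part stays at distance $1$ from $0$; a standard comparison then forces the harmonic measure of the bite boundary from $0$ to tend to $1$, i.e. $\theta(r)\to\pi$. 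Continuity of $\theta$ I would deduce from continuous dependence of the map on the domain: as $r'\to r$ the domains $\OOO_{n,r'}$ converge to $\Onr$ in the sense of kernel convergence with respect to $0$, so by the Carath\'eodory kernel theorem $\Phi_{r'}\to\Phi_r$ locally uniformly, and this together with the continuous motion of the corners $w^\pm_{n,r}=e^{\pm i\gamma(r)}$ yields continuity of the boundary correspondence, hence of $\theta$. I expect the main obstacle to be exactly this last point — rigorously upgrading interior kernel convergence to continuity of the boundary correspondence at the corners, together with the harmonic-measure pinching estimate as $r\to 1^-$; both are standard in principle but require care.
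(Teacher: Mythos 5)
Your proof is correct, and while its skeleton matches the paper's (the normalized Riemann maps with the symmetry $\Phi_r(\bar z)=\overline{\Phi_r(z)}$, Carath\'eodory kernel convergence upgraded to boundary convergence via Pommerenke to get continuity in $r$, then an intermediate-value argument in the parameter), the decisive step is handled by a genuinely different mechanism. The paper never introduces harmonic measure: it considers the inclusion $\phi_r(\wideparen{(e^{i\alpha},e^{-i\alpha})})\subset\wideparen{(w^+_{n,r},w^-_{n,r})}$ (in your language, $\theta(r)\le\alpha$), shows by a soft normal-families argument that it cannot hold for \emph{all} $r\in[\tfrac12,1)$ --- if it did, Schwarz reflection across the $\T$-part would give maps $\Phi_{r_k}$, $r_k\to1$, converging locally uniformly to a nonconstant (hence open) limit $g$ with $g(0)=0$, even though $0$ lies on the boundary of $\bigcap_k\widetilde{\OOO}_{n,r_k}$ --- and then defines $r_{n,\alpha}$ as the supremum of the good parameters. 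Your identity $\theta(r)=\pi\big(1-\omega(0,\partial\Onr\cap\T,\Onr)\big)$ replaces this with a quantitative endpoint analysis, and it buys something extra: since for $r<r'$ one has $\OOO_{n,r'}\subset\Onr$ and $\partial\OOO_{n,r'}\cap\T\subset\partial\Onr\cap\T$, domain monotonicity of harmonic measure makes $\theta$ monotone (strictly, by the strong maximum principle), so you get uniqueness of $r_{n,\alpha}$ directly, whereas the paper obtains it only a posteriori (Remark~\ref{re:uniqueness r_n}) from Theorem~\ref{th:lambda_n(interval)}. The step you rightly flag as the main obstacle, $\theta(r)\to\pi$ as $r\to1^-$, is true and closes cleanly by the same monotonicity principle rather than a delicate pinching lemma: the bite contains the radial segment $[t_r,1]$ with $t_r=(1/r-1)^{1/n}$, so $\Onr\subset\D\setminus[t_r,1]$ while $\partial\Onr\cap\T\subset\T\setminus\{1\}$, whence
\begin{equation*}
\omega\bigl(0,\partial\Onr\cap\T,\Onr\bigr)\le\omega\bigl(0,\T\setminus\{1\},\D\setminus[t_r,1]\bigr)=\frac{4}{\pi}\arctan\sqrt{t_r}\longrightarrow 0,
\end{equation*}
the exact value coming from the M\"obius map $z\mapsto(z-t_r)/(1-t_rz)$ followed by $z\mapsto\sqrt{z}$, which carries $\D\setminus[t_r,1]$ onto the upper half-disk, sends $0$ to $i\sqrt{t_r}$, and sends $\T\setminus\{1\}$ to the semicircle, whose harmonic measure at $iy$ equals $\tfrac4\pi\arctan y$. (Two small remarks: for $n\ge2$ the bite's boundary arcs are not literally circular, but they are Jordan arcs, which is all your Carath\'eodory-extension step needs; and the explicit $n=1$ formula $r_{1,\alpha}=\tfrac12\sec\tfrac{\pi\alpha}{\pi+2\alpha}\to1$ as $\alpha\to\pi$ in Theorem~\ref{th:formula for lambda_1} corroborates your limit $\theta(r)\to\pi$.)
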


\begin{proof}
Fix $n \geq 1$ and $r \in (\frac{1}{2}, 1)$. The domain $\Onr$ is simply connected and so there is a unique conformal homeomorphism $\phi_r$ satisfying
$$\phi_r:\bbD\to\Onr, \quad \phi_r(0)=0, \quad \phi_r'(0)>0.$$ Since $\Onr$ is symmetric with respect to $\R$, it is easy to see that $\overline{\phi_r(\bar z)}$ satisfies the same conditions, and thus by uniqueness, we have $\phi_r(z)=\overline{\phi_r(\bar z)}$. In particular, $\phi_r((-1, 1))\subset \bbR$. We also see that $\phi_r$ extends continuously to $\T$ and satisfies the conditions
$$\phi_r(-1)=-1 \quad \mbox{and} \quad \phi_r(1)=\left(\frac{1-r}{r}\right)^{1/n}.$$ Finally, again by uniqueness, $\phi_{1/2}(z) = z$, i.e., $\phi_{1/2}$ is the identity map.

If $r_k, r\in [\frac{1}{2}, 1)$ and $r_k\to r$, one sees first that the domains $\OOO_{n,r_k}$ and $\Onr$ satisfy the hypothesis of the Carath\'{e}odory kernel theorem \cite[Theorem 1.8]{Po} and thus $\phi_{r_k}(z)\to\phi_r(z)$ for all $z\in \bbD$. Then  $\OOO_{n,r_k}$ and $\phi_{r_k}$ satisfy the hypotheses of~\cite[Corollary 2.4]{Po}, whence it follows that $\phi_{r_k}\to\phi_r$ uniformly on $\T$. See Figure \ref{FigureX3} for an illustration of the dependence of the domain $\OOO_{n, r}$ on the parameter $r$.

\begin{figure}
\centering
\begin{subfigure}{.335\textwidth}
  \centering
  \includegraphics[width=.9\linewidth]{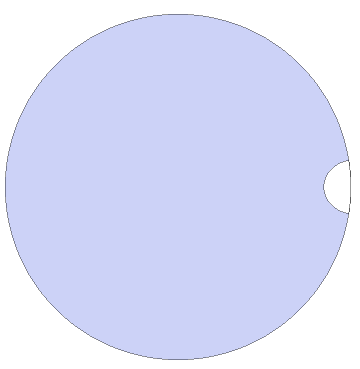}
  \caption{}
  \label{}
\end{subfigure}%
\begin{subfigure}{.335\textwidth}
  \centering
  \includegraphics[width=0.9\linewidth]{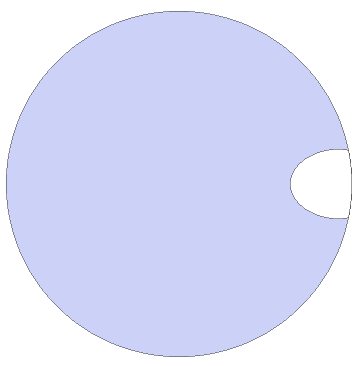}
  \caption{}
  \label{}
\end{subfigure}%
\begin{subfigure}{.335\textwidth}
  \centering
  \includegraphics[width=0.9\linewidth]{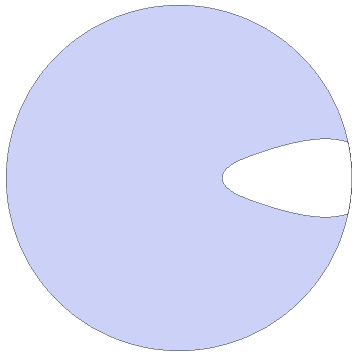}
  \caption{}
  \label{}
\end{subfigure}
\caption{{\footnotesize (A) $\OOO_{5, .07}$, (B) $\OOO_{5, 0.9}$, (C) $\OOO_{5, 0.999}$}}
\label{FigureX3}
\end{figure}

%\begin{figure}
%		\begin{center}
%			\includegraphics[width=1\textwidth]{three-examples}
%		\end{center}
%		\caption{\footnotesize The domains (shaded) $\OOO_{5, 0.7}, \OOO_{5, 0.8}, \OOO_{5, 0.9}$.}
%		\label{FigureX3}
%	\end{figure}
In particular, for our fixed $\alpha \in (0, \pi)$, the map from $[\frac{1}{2}, 1)$ to $\C$ defined by  $r\mapsto \phi_r(e^{i\alpha})$ is continuous.

Recalling that 
$\wideparen{(e^{i \alpha}, e^{-i\alpha})}$ goes from $e^{i \alpha}$ to $e^{-i\alpha}$ the\emph{ long way} around $\T$ and, similarly, 
$\wideparen{(w^+_{n,r}, w^{-}_{n,r})}$ goes from the upper corner $w^{+}_{n, r}$ to the lower corner $w^{-}_{n, r}$ the\emph{ long way} around $\T$, 
suppose that  $r\in [\frac{1}{2}, 1)$ is such that 
\begin{equation}\label{eq:arcs}
 \phi_r(\wideparen{(e^{i\alpha}, e^{-i\alpha})})\subset \wideparen{(w^+_{n,r}, w^-_{n,r})}.
\end{equation}
Then $\phi_r$ can be continued by Schwarz reflection  to a function $\Phi_r$ analytic in
 $$\widehat{\C} \setminus \wideparen{(w^-_{n,r}, w^+_{n,r})},$$ and the range of any such $\Phi_r$ (for fixed $r_0 > \frac{1}{2}$) does not contain a fixed neighborhood of the point 1. 

Suppose now that~\eqref{eq:arcs} is true for \emph{every} $r\in [\frac{1}{2}, 1)$. For a sequence $r_k\to 1$, the functions $$\frac{1}{\Phi_{r_k}-1}, \quad k \geq 1,$$ form a normal family in the domain
$$\Omega_n:= \widehat{\C}\setminus \wideparen{(e^{-i\pi/3n}, e^{i\pi/3n})},$$
the intersection of the decreasing (see \eqref{OOO-dec}) domains 
$$\widehat{\C} \setminus \wideparen{(w^{-}_{n, r_{n_k}}, w^{+}_{n, r_{n, k}})}, \quad  k \geq 1.$$

By passing to a subsequence, we may assume that $(\Phi_{r_k}-1)^{-1}$ converges uniformly on compact subsets of $\Omega_n $. Thus $\Phi_{r_k}$ converges uniformly on compact subsets of $\Omega_n$ to some analytic function $g$. Since $\Phi_r(0)=0$ and $\phi_r(-1)=-1$ for all $r$, we must have $g(0)=0$ and $g(-1)=-1$. Thus $g$ is a non-constant analytic function  and so $g$ must be open. On the other hand, if 
$$\widetilde{\OOO}_{n, r} := \OOO_{n, r} \cup 
\left\{z \in \C: \frac{1}{z} \in \OOO_{n, r}
\right\}^{-},$$
then the image of $\Phi_{r_k}$ is contained in $\widetilde{\OOO}_{n, r_k}$. 
We will now derive a contradiction and show that $g$ is not an open map. Indeed, the image of $g$ is contained in 
$$\bigcap_{k} \widetilde{\OOO}_{n, r_k} = \widetilde{\OOO}_{n, 1}$$
and this last set contains $0$ in its boundary. But since $g(0) = 0$, we see that $g$ cannot be an open map. 

It now follows that~\eqref{eq:arcs} cannot be true for \emph{every} $r\in [\frac{1}{2}, 1)$. For $r=\tfrac{1}{2}$ we see that $\phi_{1/2}(e^{i\alpha})=e^{i\alpha}$ and $w^\pm_{n,1/2}=1$, so ~\eqref{eq:arcs} is satisfied. Clearly $w^\pm_{n,r}$ depends continuously on the parameter $r$. If we define
\[
r_{n,\alpha}:=\sup\left\{r\in[\tfrac{1}{2}, 1): \text{\eqref{eq:arcs} is true for any $s\in [\tfrac{1}{2}, r)$}\right\},
\]
then 
$$\phi_{r_{n,\alpha}}(e^{\pm i\alpha})=w^\pm_{n,r}.$$ Indeed, by taking a sequence $r_k\nearrow r_{n,\alpha}$ one sees that $|\phi_{r_{n,\alpha}}(e^{\pm i\alpha})|=1$. If, say, $\Arg(\phi_{r_{n,\alpha}}(e^{ i\alpha}))>\Arg w^+_{n,r}$, then, by continuity, this would happen for all $r>r_{n,\alpha}$ in a small neighborhood of $r_{n,\alpha}$, which is easily seen to contradict the definition of $r_{n,\alpha}$. 
It follows that $r_{n,\alpha}$ and 
 $\Phi_{n,\alpha}=\phi_{r_{n,\alpha}}$ satisfy the requirements of the lemma.
\end{proof}

%\begin{Remark}\label{re:analytic?}
%\emph{Maybe one can obtain an explicit analytic expression for $\phi_r$, at least in terms of some special %functions. This could lead to a possible explicit formula for $r_{n,\alpha}$. The indirect argument in the preceding %proof would not be necessary anymore.}
%\end{Remark}

\begin{Remark}\label{re:uniqueness r_n}
The uniqueness of $r_{n,\alpha}$ and $\Phi_{n,\alpha}$ subject to the conditions in Lemma~\ref{le:conformal general n} is a consequence of Theorem~\ref{th:lambda_n(interval)} (see below), since it is shown in its proof that $r_{n,\alpha}=\dist(\bar z^n\chi_{I_{\alpha}}, H^\infty)$, and that $\Phi_{n,\alpha}$ is uniquely defined by prescribing values at the three points $0, e^{-i\alpha}$ and $e^{i \alpha}$. 
\end{Remark}

\subsection{The heart of the matter} We have now arrived at the main part of our argument which requires some technical details of Hankel and Toeplitz operators.  Here is our main result. 

\begin{Theorem}\label{th:lambda_n(interval)}
If $\alpha \in [0, \pi]$, $I_{\alpha} = \wideparen{(e^{-i\alpha}, e^{i\alpha})}$, and  $n \geq 1$, then 
$$\Lambda_n(I_{\alpha})=r_{n,\alpha}.$$
\end{Theorem}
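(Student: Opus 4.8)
The plan is to identify $\Lambda_n(I_\alpha) = \dist(\bar z^n\chi_{I_\alpha}, H^\infty)$ with the parameter $r_{n,\alpha}$ furnished by Lemma~\ref{le:conformal general n} by producing an explicit best $H^\infty$-approximant whose error function has constant modulus $r_{n,\alpha}$, and then confirming optimality via the Toeplitz/Hankel machinery. The conformal map $\Phi_{n,\alpha}:\bbD\to\OOO_{n,r_{n,\alpha}}$ is the engine: by construction its boundary values traverse the complementary circular arc $\{|z^n - 1/r|\le 1\}\cap\T$ precisely over $e^{it}$ with $t\in(-\alpha,\alpha)$, i.e.\ over $I_\alpha$, while sending the rest of $\T$ to $\partial\OOO_{n,r_{n,\alpha}}\cap\T$. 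The defining geometry $|w^n - 1/r| = 1$ on the removed circular boundary means that on $I_\alpha$ one has $|\Phi_{n,\alpha}^n - 1/r| = 1$, whereas off $I_\alpha$ one has $|\Phi_{n,\alpha}| = 1$. The first step, then, is to unwind these two boundary relations into a single statement: the function $u := \bar z^n\chi_{I_\alpha} - g$ has constant modulus $r_{n,\alpha}$ a.e.\ on $\T$, for a suitable $g\in H^\infty$ built from $\Phi_{n,\alpha}^n$.

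Concretely, I would set $\Psi := \Phi_{n,\alpha}^n$, an analytic function on $\bbD$, and examine $\bar z^n - \Psi/r$ (or a closely related combination) on the two pieces of $\T$. On $I_\alpha$, where $|\Psi - 1/r| = 1$, rearranging gives $|r\bar z^n(\text{something}) - (\ldots)| = r$ after multiplying by the unimodular factor $\bar z^n$; off $I_\alpha$, where $|\Psi|=1$ and $\chi_{I_\alpha}=0$, the same candidate error reduces to $r$ times a unimodular quantity. The goal is to choose the analytic correction $g\in H^\infty$ so that $\bar z^n\chi_{I_\alpha} - g$ equals $r_{n,\alpha}$ times a \emph{unimodular} function $v$ on all of $\T$. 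The normalization $\Phi_{n,\alpha}(0)=0$ and the symmetry $\Phi_{n,\alpha}(\bar z)=\overline{\Phi_{n,\alpha}(z)}$ should guarantee that the correction term is genuinely in $H^\infty$ (analytic, bounded) and that the constant modulus comes out to exactly $r_{n,\alpha}$. Once $\|u\|_\infty = r_{n,\alpha}$ with $u$ of constant modulus is in hand, the upper bound $\Lambda_n(I_\alpha)\le r_{n,\alpha}$ is immediate since $u = \bar z^n\chi_{I_\alpha} - g$ exhibits an admissible competitor in the distance problem \eqref{eq:definition of lambda_n with distance}.

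For the matching lower bound I would invoke Lemma~\ref{Lemma:Unimodular}(2). Writing $u = r_{n,\alpha} v$ with $v$ unimodular, I need $T_v$ to be Fredholm with $\ind T_v > 0$; then $\|u\|_\infty = \dist(u, H^\infty)$, forcing $\dist(\bar z^n\chi_{I_\alpha}, H^\infty) \ge r_{n,\alpha}$ and closing the argument. The winding/index computation is where the geometry of $\Phi_{n,\alpha}$ pays off again: $v$ is, up to the known piecewise structure, essentially $\bar z^n$ times a quotient of boundary values of $\Psi$, and its argument increment around $\T$ can be read off from how $\Phi_{n,\alpha}^n$ wraps around the puncture. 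To apply Lemma~\ref{Lemma:Unimodular}(2) I must also verify that $T_v$ is Fredholm, for which Lemma~\ref{Peller-2} reduces matters to checking $\|\HHH_v\|_e < 1$ and $\|\HHH_{\bar v}\|_e < 1$; here the essential-norm computation of Lemma~\ref{Lemma:Essential}, which pins the essential norm of these characteristic-function Hankel operators at $\tfrac12 < r_{n,\alpha}$-scaled bounds, together with the fact that $r_{n,\alpha}<1$, should deliver the strict inequalities.

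The main obstacle I anticipate is the explicit algebraic bookkeeping that converts the two geometric boundary conditions ($|\Psi - 1/r|=1$ on $I_\alpha$ and $|\Psi|=1$ off $I_\alpha$) into a single clean factorization $\bar z^n\chi_{I_\alpha} - g = r_{n,\alpha}\,v$ with $v$ honestly unimodular and $g$ honestly in $H^\infty$ — in particular verifying that the would-be analytic approximant has no poles in $\bbD$ and that the modulus is genuinely \emph{constant} rather than merely bounded. The secondary difficulty is computing the index of $T_v$ correctly, i.e.\ tracking the total argument change of $v$ around $\T$ through the corners $w^\pm_{n,r_{n,\alpha}}$ and confirming it is strictly positive; the continuity and boundary-correspondence properties established in Lemma~\ref{le:conformal general n} (namely $\Phi_{n,\alpha}(e^{\pm i\alpha}) = w^\pm_{n,r_{n,\alpha}}$) are exactly the ingredients needed to make this winding count rigorous.
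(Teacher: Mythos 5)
Your proposal follows essentially the same route as the paper's proof: the paper sets $\phi := r_{n,\alpha}\Phi_{n,\alpha}^n$, observes that $u := \bar z^n(\phi - \chi_{I_\alpha}) = \phi_1 - \bar z^n\chi_{I_\alpha}$ has constant modulus $r_{n,\alpha}$ with $\phi_1 = \phi/z^n \in H^\infty$ (your candidate $g$), and then applies Lemma~\ref{Lemma:Unimodular}(ii), checking Fredholmness exactly as you outline via Lemma~\ref{Peller-2} together with Lemma~\ref{Lemma:Essential} (the key inequality being $\tfrac12 < r_{n,\alpha}$, not $r_{n,\alpha}<1$), and computing $\ind T_u = \ind T_{\bar z^n} + \ind T_{\phi - \chi_{I_\alpha}} = n + (1-n) = 1 > 0$. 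The two obstacles you flag are resolved just as you predict: the algebra is the one-line identity above, and the index computation is made rigorous by Sarason's theorem on piecewise-continuous symbols, closing up the curve $(\phi-\chi_{I_\alpha})(\T)$ with line segments at the two jump points $e^{\pm i\alpha}$ and counting winding number $n-1$.
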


\begin{proof}
Fix $\alpha$ and $n$ and let $I = I_{\alpha}$ and define 
$$\phi:=r_{n,\alpha}\Phi_{n,\alpha}^n.$$
 From Lemma~\ref{le:conformal general n} it follows that $\phi$ is analytic on $\D$, continuous on $\D^{-}$, and satisfies 
 $$\phi(\wideparen{(e^{i\alpha}, e^{-i\alpha})}) \subset r_{n,\alpha}\T \quad \mbox{and} \quad \phi(\wideparen{(e^{-i\alpha}, e^{i\alpha})}) \subset 1+r_{n,\alpha}\T$$
\begin{figure}
		\begin{center}
			\includegraphics[width=0.4\textwidth]{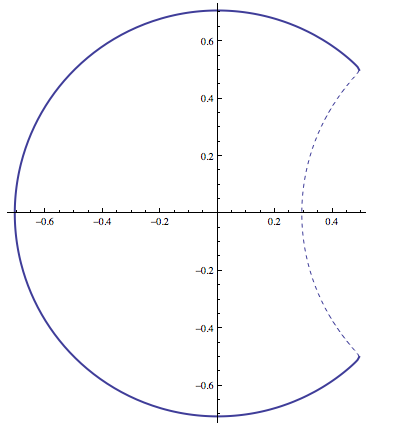}
		\end{center}
		\caption{\footnotesize If $n = 1$ then $\phi(\wideparen{(e^{i\alpha},e^{-i\alpha})}) \subset r_{1,\alpha}\T$ (solid) and $\phi(\wideparen{(e^{-i\alpha}, e^{i\alpha})}) \subset 1+r_{1,\alpha}\T$ (dashed). }
						\label{FigureGm12}
	\end{figure}
 %\begin{figure}
%		\begin{center}
%			\includegraphics[width=0.65\textwidth]{2circles}
%		\end{center}
%		\caption{\footnotesize Reinforcing that the the dotted curve in the previous figure is an arc of a circle.  }
%				\label{FigureGm13}
%	\end{figure}
(see Figure \ref{FigureGm12}). 
   Therefore, the function $u: \T \to \C$ defined by
  $$u:=\bar z^n(\phi - \chi_{I})$$ has constant absolute value equal to $r_{n,\alpha}$. Also observe that $\phi$ has a zero of order $n$ at the origin allowing us to write $\phi=z^n\phi_1$ and thus 
  $$u=\phi_1 - \bar z^n\chi_{I}.$$

We want to apply Lemma \ref{Lemma:Unimodular} to $u$.  First, $u$ has constant absolute value on $\T$. Second 
\begin{equation}\label{Toe-id}
T_u=T_{\bar z^n}T_{\phi-\chi_I}
\end{equation} 
and so $T_u$ is Fredholm if and only if $T_{\phi-\chi_I}$ is Fredholm. 
Third, since $\phi$ is continuous we can use ~\eqref{eq:EssentialDistance} and Lemma \ref{Lemma:Essential} to see that  
$$\|\HHH_{\phi-\chi_I}\|_e= \|\HHH_{\overline{\phi} - \chi_{I}}\|_{e}  = \|\HHH_{\chi_{I}}\|_e = \tfrac{1}{2} <r_{n,\alpha}.$$ It now follows from Lemma \ref{Peller-2} that $T_{\phi - \chi_{I}}$ is Fredholm. 

To compute the index of $T_u$, note that $\phi - \chi_{I}$ is piecewise continuous, with two discontinuity points at $e^{-i\alpha}$ and $e^{i\alpha}$. By~\cite[Theorems 1 and 2]{Sa} we know that (i)  the harmonic extension $\mathfrak{P}(\phi - \chi_{I})(r, t)$ of $\phi-\chi_{I}$ is bounded away from zero in some annulus $\{z: 1-\epsilon<|z|<1\}$; (ii) for any fixed $r \in (1 - \epsilon, 1)$ the curve $t\mapsto \mathfrak{P}(\phi - \chi_{I})(r, t)$ has the same winding number with respect to $0$; (iii) the  index of $T_{\phi - \chi_{I}}$ is equal to minus this winding number. 

The compute this winding number, notice that the circles of radius $r_{n,\alpha}$ centered at $0$ and $1$ intersect at the two points $r_{n,\alpha}e^{\pm i\beta}$ where 
$\cos\beta=\frac{1}{2r_{n,\alpha}}$. We denote by $\gamma$ the curve obtained by considering the curve $(\phi-\chi_I)(\T)$ and then making it into a closed curve by adding, in the appropriate places, the segments 
$$[r_{n,\alpha} e^{i\beta}-1, r_{n,\alpha} e^{i\beta}]  \quad \mbox{and} \quad [r_{n,\alpha} e^{-i\beta}, r_{n,\alpha}e^{-i\beta}-1] $$ (see Figure \ref{winding12}).
%\begin{figure}
%  \centering
 % \def\svgwidth{270pt}
 % \input{figure0.pdf_tex}
 % \caption{{\footnotesize The curve $\phi(\T)$ when $n=3$.}}
 % \label{winding1}
%\end{figure}
%\begin{figure}
 % \centering
  %\def\svgwidth{290pt}
  %\input{figure1.pdf_tex}
  %\caption{{\footnotesize The curve $\gamma$ corresponding to Figure \ref{winding1}. It has winding number $3 - %1 = 2$ with respect to the origin.} }
  %\label{winding2}
%\end{figure}
\begin{figure}
\centering
\begin{subfigure}{.52\textwidth}
  \centering
  \includegraphics[width=1.0\linewidth]{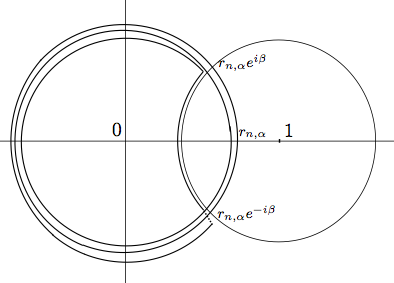}
  \caption{}
  \label{winding1}
\end{subfigure}%
\begin{subfigure}{.52\textwidth}
  \centering
  \includegraphics[width=1.0\linewidth]{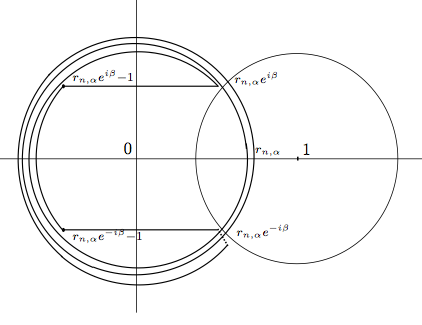}
  \caption{}
  \label{winding2}
\end{subfigure}
\caption{{\footnotesize (A) The curve $\phi(\T)$ when $n=3$. (B) The curve $\gamma$ corresponding to Figure \ref{winding1}. It has winding number $3 - 1 = 2$ with respect to the origin.}}
\label{winding12}
\end{figure}

This last curve has winding number $n-1$ with respect to the origin and so by \eqref{Toe-id}
\[
\ind T_u=\ind T_{\bar z^{n}}+\ind T_{\phi - \chi_{I}}  = n + (1 - n) =1.
\]

To finish, Lemma \ref{Lemma:Unimodular} (ii) tells us that 
\[
\begin{split}
 r_{n,\alpha}&=\|u\|_\infty=\dist(u, H^\infty)=\dist(\bar z^n\chi_{I}-\phi_1, H^\infty)\\&=\dist(\bar z^n\chi_{I}, H^\infty)=
\Lambda_n(I_{\alpha})
\end{split}
\]
which proves the theorem.
\end{proof}

\begin{Remark}
Since 
\[\|\HHH_{\bar z^n \chi_{I_{\alpha}}} \| =r_{n,\alpha}>1/2= \|\HHH_{\bar z^n \chi_{I_{\alpha}}} \|_e,
\]
it follows from a classical result of Adamyan--Arov--Krein (see~\cite[Theorem 1.1.4]{Peller}) that $g=\phi_1$
is the unique minimizing function in \eqref{eq:definition of lambda_n with distance}.

\end{Remark}

\section{An explicit computation for $n=1$}

When  $n=1$ one can make explicit computations. In this case 
$$\OOO_{1,r}= \D \setminus \left \{z: \left|z- \frac{1}{r}\right| \le1\right\}$$ 
\begin{figure}
		\begin{center}
			\includegraphics[width=0.4\textwidth]{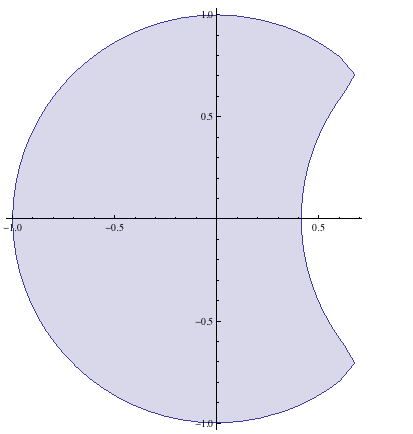}
		\end{center}
		\caption{\footnotesize The domain $\OOO_{1, r_{1, \pi/2}}$.}
				\label{FigureGm1}
	\end{figure}
(see Figure \ref{FigureGm1}) and the corresponding conformal homeomorphisms can be computed in closed form, leading to a precise formula for $\Lambda_1(I_{\alpha})$, where 
$$I_{\alpha} = \wideparen{(e^{-i\alpha}, e^{i\alpha})}.$$

\begin{Theorem}\label{th:formula for lambda_1}
For $\alpha \in (0, \pi)$ we have 
\begin{equation*}%\label{eq:formula for n=1}
 \begin{split}
  r_{1,\alpha}&=\frac{1}{2}\sec\left(\frac{\pi\alpha}{\pi+2\alpha}\right),\\  
  \Phi_{1,\alpha}(z)& =
  \frac{\left( \frac{z-e^{i\alpha}}{e^{i\alpha}z-1} \right)^\frac{\pi}{\pi+2\alpha}
-e^\frac{i\pi\alpha}{\pi+2\alpha}  }{\left( \frac{e^{i\alpha}z-e^{2i\alpha}}{e^{i\alpha}z-1} \right)^\frac{\pi}{\pi+2\alpha}-1 },
   \end{split}
\end{equation*}
where we have taken the principal branch of the power $\frac{\pi}{\pi+2\alpha}$.
Therefore, 
\begin{equation}
\label{eq:lambda for n=1}
\Lambda_1(I_{\alpha})=\frac{1}{2} \sec\frac{\pi\alpha}{\pi+2\alpha}.
\end{equation}
\end{Theorem}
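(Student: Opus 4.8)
The plan is to recognize $\OOO_{1,r}$ as a \emph{lune} --- a region bounded by two circular arcs --- and to write the conformal map $\D\to\OOO_{1,r}$ of Lemma~\ref{le:conformal general n} as a composition of two M\"obius maps and a single power map, and then to solve for the value of $r$ forced by the normalization $\Phi(0)=0$, $\Phi(e^{\pm i\alpha})=w^\pm_{1,r}$. First I would record the geometry. The two circles bounding $\OOO_{1,r}$, namely $\T$ and $\{|z-1/r|=1\}$, meet precisely at $w^\pm_{1,r}=e^{\pm i\beta}$, where $\cos\beta=\tfrac{1}{2r}$; this is elementary, since any intersection point has real part $1/(2r)$. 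A tangent-line computation at $w^+_{1,r}$ then shows that the interior angle of the lune $\OOO_{1,r}$ at each corner equals $\gamma=\pi-2\beta$.

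Next I would introduce the M\"obius transformations
$$B(z)=\frac{z-e^{i\alpha}}{e^{i\alpha}z-1},\qquad N(w)=\frac{w-e^{i\beta}}{e^{i\beta}w-1}.$$
One checks directly that $B$ carries $\D$ conformally onto the upper half-plane $\HHH$ with $B(e^{i\alpha})=0$, $B(e^{-i\alpha})=\infty$, and $B(0)=e^{i\alpha}$, while $N$ sends $w^+_{1,r}\mapsto 0$, $w^-_{1,r}\mapsto\infty$, with $N(0)=e^{i\beta}$. Since both bounding circles of $\OOO_{1,r}$ pass through $w^\pm_{1,r}$, their $N$-images are two straight lines through the origin; conformality (which preserves the angle $\gamma$) together with the interior-point check $N(0)=e^{i\beta}$ then identifies $N(\OOO_{1,r})$ with the wedge $W_\gamma:=\{w:0<\arg w<\gamma\}$. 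I would then set $\Phi:=N^{-1}\circ P\circ B$, where $P(\zeta)=\zeta^{\mu}$ is the principal $\mu$-th power, which maps $\HHH=W_\pi$ onto $W_{\mu\pi}$.

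By construction $\Phi(e^{\pm i\alpha})=w^\pm_{1,r}$ for \emph{every} $\mu$. Demanding that $\Phi$ actually map \emph{onto} $\OOO_{1,r}$, i.e. $P(\HHH)=W_\gamma$, forces $\mu=\gamma/\pi=1-\tfrac{2\beta}{\pi}$; and the normalization $\Phi(0)=0$ amounts to $N(0)=\bigl(B(0)\bigr)^{\mu}$, that is $e^{i\beta}=e^{i\mu\alpha}$, hence $\mu\alpha=\beta$. Solving these two relations gives $\beta=\tfrac{\pi\alpha}{\pi+2\alpha}$ and $\mu=\tfrac{\pi}{\pi+2\alpha}$, whence $r_{1,\alpha}=\tfrac12\sec\beta=\tfrac12\sec\tfrac{\pi\alpha}{\pi+2\alpha}$. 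Unwinding the composition, using $\tfrac{e^{i\alpha}z-e^{2i\alpha}}{e^{i\alpha}z-1}=e^{i\alpha}B(z)$ and $e^{i\mu\alpha}=e^{i\beta}$, then yields the displayed closed form for $\Phi_{1,\alpha}$, and the evaluation of $\Lambda_1(I_\alpha)$ follows at once from Theorem~\ref{th:lambda_n(interval)}, which gives $\Lambda_1(I_\alpha)=r_{1,\alpha}$. Uniqueness (Remark~\ref{re:uniqueness r_n}) confirms that the map produced is genuinely $\Phi_{1,\alpha}$.

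The main obstacle is the geometric bookkeeping in the middle step: computing the interior angle $\pi-2\beta$ correctly and, above all, getting every \emph{orientation} right --- which half-plane $B(\D)$ is, which ray of each line carries the relevant boundary arc, and that $N(\OOO_{1,r})$ is $W_\gamma$ rather than its complement or reflection. A sign error here would surface as a wrong branch or a wrong exponent, so I would pin down each orientation by tracking concrete points (for instance $B(1)=-1$, $N(-1)=1$, and $\Phi(1)=(1-r)/r$, matching the value $\phi_r(1)$ from the proof of Lemma~\ref{le:conformal general n}) before trusting the composed formula.
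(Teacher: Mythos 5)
Your proposal is correct and is essentially the paper's own proof: since the M\"obius map $N$ is an involution ($N^{-1}=N$), your composition $N^{-1}\circ P\circ B$ is literally the paper's map $w\circ\phi$ with the same exponent $\tfrac{\pi}{\pi+2\alpha}$, the same normalization checks at $0$ and $e^{\pm i\alpha}$, and the same appeal to Theorem~\ref{th:lambda_n(interval)} and Remark~\ref{re:uniqueness r_n}. The only difference is expository: where you derive the lune-to-wedge reduction (interior angle $\pi-2\beta$) and solve for $\mu$ and $\beta$ from the two constraints, the paper cites Kober's tables for the conformal map of $\bbH$ onto $\OOO_{1,r}$ and simply verifies the stated formulas.
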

 
\begin{proof}
Define $$\beta : =\frac{\pi\alpha}{\pi+2\alpha}.$$
It is easily checked, using the notation in Section~\ref{se:interval}, that $$e^{\pm i\beta}=w^\pm_{1,r_{1,\alpha}}.$$ If
\[
\phi(z)=\frac{z-e^{i\alpha}}{e^{i\alpha}z-1},  \quad
w(z)=\frac{z^{\beta/\alpha}-e^{i\beta}}{e^{i\beta}z^{\beta/\alpha}-1},
\]
then $\phi$ is a conformal homeomorphism from $\bbD$ to the upper half plane $\bbH$, while (see, for instance,~\cite[page 48]{Kob}), $w$ is a conformal homeomorphism from $\bbH$ to $\OOO_{1,r_{1,\alpha}}$.
 We have $\Phi_{1,\alpha}=w\circ \phi$, and one can check directly that $\Phi_{1,\alpha}(0)=0$ and $\Phi_{1,\alpha}(e^{\pm i\alpha})=e^{\pm i\beta}$. Thus $\Phi_{1,\alpha}$ satisfies the conditions of Lemma~\ref{le:conformal general n}.
\end{proof}
 
For instance, 
$$\Lambda_1(I_{\pi/2}) = \tfrac{1}{\sqrt{2}}.$$ It also follows from~\eqref{eq:lambda for n=1} that 
$\alpha \mapsto \Lambda_{1}(I_{\alpha})$ is an increasing function. 

\begin{Question}
Is $\alpha \mapsto \Lambda_{n}(I_{\alpha})$ an increasing function for every $n$? 
\end{Question}

%\begin{Remark}
%\emph{Probably an argument on the lines of the proof of Theorem~\ref{th:lambda_n(interval)} could be used to %show that $\Lambda_n((-\alpha, \alpha))$ is increasing in $\alpha$ for any $n\ge 1$. On the other hand, this %might follow easier if one finds more explicit formulas. }
%\end{Remark}

\begin{Question}
When $E$ is an arc and $n=1$, is the supremum in \eqref{eq-Lambda} attained?
\end{Question}

\begin{Remark}
 When $n > 1$, it does not seem possible to obtain explicit formulas for $\Phi_{n, \alpha}$ and $r_{n, \alpha}$.
\end{Remark}
 
\section{More general sets}\label{se:more general sets}

The following lemma was proved by Nordgren~\cite{Nordgren} (see also~\cite[Theorem 7.4.1, Remark 9.4.6]{CMR}).

\begin{Lemma}\label{le:theta(0)=0}
 If $\theta$ is an inner function with $\theta(0)=0$, then $\theta$ is measure preserving as a transformation from $\T$ to itself.
\end{Lemma}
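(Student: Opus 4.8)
The plan is to prove that an inner function $\theta$ with $\theta(0)=0$ is measure preserving on $\T$, meaning that for every measurable set $A\subset\T$ one has $|\theta^{-1}(A)|=|A|$ (in normalized Lebesgue measure). The standard and most efficient route is through the push-forward measure and Fourier coefficients. First I would let $m$ denote normalized Lebesgue measure on $\T$ and consider the push-forward measure $\mu := \theta_* m$ defined by $\mu(A) = m(\theta^{-1}(A))$; the goal is exactly to show $\mu = m$. Since a finite Borel measure on $\T$ is determined by its Fourier--Stieltjes coefficients $\widehat{\mu}(k) = \int_{\T} \overline{\zeta}^{k}\,d\mu(\zeta)$, it suffices to check that these agree with those of $m$, namely $\widehat{\mu}(k) = \delta_{k,0}$.

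The key computation uses the change-of-variables identity $\int_{\T} f\,d\mu = \int_{\T} f\circ\theta\,dm$ for $f\in L^1(\mu)$, which is immediate from the definition of push-forward. Taking $f(\zeta)=\zeta^{k}$ for $k\geq 0$ gives
\begin{equation*}
\widehat{\mu}(-k) = \int_{\T} \zeta^{k}\,d\mu(\zeta) = \int_{\T} \theta(\zeta)^{k}\,dm(\zeta) = \widehat{\theta^{k}}(0) = \theta(0)^{k},
\end{equation*}
where the last equality holds because $\theta^{k}\in H^\infty\subset H^2$ (as $\theta$ is inner, hence bounded analytic) and the mean value property identifies the $0$th Fourier coefficient of an $H^2$ function with its value at the origin. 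Since $\theta(0)=0$, this yields $\widehat{\mu}(-k)=0$ for $k\geq 1$ and $\widehat{\mu}(0)=\theta(0)^{0}=1$. The coefficients for $k\geq 1$ follow by the reality/conjugation symmetry: $\mu$ is a positive measure, so $\widehat{\mu}(k)=\overline{\widehat{\mu}(-k)}$, hence $\widehat{\mu}(k)=0$ for $k\geq 1$ as well. Thus $\widehat{\mu}(k)=\delta_{k,0}$ for all $k\in\Z$, which are precisely the Fourier coefficients of $m$, so $\mu=m$ and $\theta$ is measure preserving.

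The main obstacle—really the only subtlety—is justifying that the boundary function $\theta$ genuinely induces a well-defined measurable map $\T\to\T$ for which the push-forward and change-of-variables formalism applies. This rests on the classical fact that an inner function has unimodular radial boundary values almost everywhere, so $\theta$ maps $\T$ into $\T$ a.e.\ and $\theta^{-1}(A)$ is measurable for Borel $A$; I would cite this standard boundary-value theory (as in \cite{Duren}). Everything else is the routine Fourier-coefficient matching above, and I would not grind through the measurability verification in detail beyond pointing to the standard reference.
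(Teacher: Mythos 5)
Your proof is correct, but there is nothing in the paper to compare it against: the paper does not prove this lemma at all, it simply quotes it as a known result of Nordgren \cite{Nordgren} (see also \cite[Theorem 7.4.1]{CMR}). What you have written is in fact the standard proof of Nordgren's result. The push-forward/Fourier--Stieltjes argument is sound: for $k\ge 1$, $\widehat{\mu}(-k)=\int_{\T}\theta(\zeta)^k\,dm(\zeta)=\theta(0)^k=0$ by the mean value property applied to $\theta^k\in H^\infty$, the conjugate symmetry $\widehat{\mu}(k)=\overline{\widehat{\mu}(-k)}$ (valid because $\mu$ is a positive, hence real, measure) handles the remaining indices, and uniqueness of Fourier--Stieltjes coefficients (density of trigonometric polynomials in $\mathcal{C}$ plus the Riesz representation theorem) forces $\mu=m$. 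Two cosmetic improvements: the index-zero coefficient is more cleanly obtained as $\widehat{\mu}(0)=\mu(\T)=m(\theta^{-1}(\T))=1$, which uses that $|\theta|=1$ a.e.\ on $\T$, rather than invoking the convention $\theta(0)^0=1$; and the measurability point you defer to the references is indeed routine, since the boundary function of $\theta$ is an a.e.\ pointwise limit of the continuous functions $\zeta\mapsto\theta(r\zeta)$ as $r\to 1^-$, hence Lebesgue measurable, so preimages of Borel sets are measurable and the push-forward formalism applies without further ado.
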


The next result appears in~\cite[Appendix]{Qiu}. We include the proof for completeness.

\begin{Theorem}\label{th:interpolation}
If $E\subset\T$ is a measurable set and $I\subset\T$ is an arc with $|I|=|E|$, then there exists an inner function $\theta$, with $\theta(0)=0$, such that  $\theta^{-1}(I)$ and $E$ are equal almost everywhere.
\end{Theorem}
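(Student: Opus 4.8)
The plan is to construct the inner function $\theta$ as a suitable conformal-type map built from an outer function, mirroring the Khavinson-type construction already used in Section~\ref{Section:Negative}. By the rotation invariance noted in \eqref{rotation-invariant}, I may assume the arc $I$ is symmetric about $1$, say $I = I_{\alpha}$ with $2\alpha = |E| = |I|$. The key idea is to produce an inner function whose boundary map sends exactly the set $E$ (up to measure zero) onto the arc $I$. First I would form the harmonic extension $\chi$ of $\chi_E$ to $\D$, normalize its harmonic conjugate $\widetilde{\chi}$ by $\widetilde{\chi}(0)=0$, and consider the outer function
\[
g := \exp\left[ \pi(-\widetilde{\chi} + i\chi) \right],
\]
which maps $\D$ into the upper half-plane $\bbH$ and has argument $\pi\chi_E$ on $\T$; thus $g$ takes boundary values on the positive real axis off $E$ and on the negative real axis on $E$ (almost everywhere). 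Composing with a Möbius transformation $\tau$ of $\bbH$ onto $\D$ turns $g$ into an inner function $\phi = \tau \circ g$ whose boundary values land in $I$ precisely on $E$, after choosing $\tau$ so that the image of the negative real axis is the target arc $I$ of the correct length.

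The main technical point is matching the \emph{measures} so that $\theta^{-1}(I)$ equals $E$ almost everywhere rather than merely having boundary values in $I$ on $E$. This is where Lemma~\ref{le:theta(0)=0} enters decisively. The plan is to first produce \emph{some} inner function $\phi$ with $\phi(0)=0$ for which the preimage $\phi^{-1}(J)$ of an arc $J$ coincides almost everywhere with $E$, where $|J|$ is determined by the construction; by Lemma~\ref{le:theta(0)=0} such a $\phi$ is measure-preserving, forcing $|\phi^{-1}(J)| = |J|$, hence $|J| = |E| = |I|$. Since two arcs of the same length differ by a rotation, I can then post-compose with a rotation $\lambda \in \T$ to arrange $\lambda \phi^{-1}(I) = E$ up to null sets, and set $\theta = \lambda\phi$ (noting $\theta(0)=0$ is preserved). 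The delicate verification is that the boundary map of the constructed $\phi$ genuinely pulls back the arc to $E$ and not to some larger set — this requires checking that $\phi$ has well-defined unimodular boundary values almost everywhere and that the two arcs of $\partial\bbH\cap\T$ (the positive and negative reals) are separated cleanly by $\phi$.

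I expect the principal obstacle to be establishing the \emph{exact} almost-everywhere equality $\theta^{-1}(I) = E$ rather than an inclusion or a statement holding only on a subset. The inequality $|E| = 2\alpha$ constrains things, and the measure-preserving property from Lemma~\ref{le:theta(0)=0} supplies the crucial rigidity: if one shows $E \subseteq \theta^{-1}(I)$ almost everywhere (which follows readily from the sign/argument behaviour of $g$ on $E$), then comparing measures via $|\theta^{-1}(I)| = |I| = |E|$ upgrades the inclusion to equality automatically. Thus the measure-preservation lemma does double duty: it both pins down the correct arc length $|J|=|I|$ and converts a one-sided boundary inclusion into the desired equality. The remaining steps — verifying $\theta$ is genuinely inner (i.e.\ unimodular on $\T$ a.e.) and that $\theta(0)=0$ survives the final rotation — are routine given the explicit outer-function formula. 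Since the authors state they include this proof from~\cite{Qiu} for completeness, I anticipate the argument is short once the measure-preserving mechanism is correctly invoked.
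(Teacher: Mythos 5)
Your proposal is correct and follows essentially the same route as the paper: your outer function $g=\exp\left[\pi(-\widetilde{\chi}+i\chi)\right]$ is precisely the paper's strip map $\psi=v+i\tilde v$ composed with $z\mapsto e^{i\pi z}$, so your half-plane picture and the paper's Riemann map of the strip $\{z:0<\Re{z}<1\}$ yield the same inner function $\phi$ with $\phi(0)=0$. The rest coincides as well: both arguments invoke Nordgren's measure-preservation result (Lemma~\ref{le:theta(0)=0}) to pin down the arc length as $|E|$ and to upgrade the a.e.\ inclusion $E\subseteq\phi^{-1}(J)$ to a.e.\ equality, then conclude by composing with a rotation.
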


\begin{proof} We may assume $0<|E|<2\pi$.
Let $v$ be the harmonic extension to $\D$ of $\chi_E$ and by $\tilde v$ its harmonic conjugate. Define $\sS:=\{z\in\C:0<\Re z<1\}$, $\delta_0:=\{z\in\C:\Re z=0\}$, $\delta_1:=\{z\in\C:\Re z=1\}$; then $\psi:=v+i\tilde v$ is an analytic map from $\D$ to $\sS$, such that the nontangential limit
$\psi(e^{it})$ is almost everywhere in $\delta_1$ for $e^{it}\in E$ and in $\delta_0$ for $e^{it}\in \T\setminus E$.

If $\tau:\sS\to \D$ is the Riemann  map that satisfies $\tau(\psi(0)=0$, then $I_0:=\tau(\delta_0)$ and $I_1:=\tau(\delta_1)$ are complementary arcs on $\T$, while $\phi:=\tau\circ \psi$ is an inner function that satisfies $\phi(0)=0$. Moreover, we have (up to sets of measure 0) $E\subset\phi^{-1}(I_1)$ and $\T\setminus E\subset\phi^{-1}(I_0) $. 

Apply Lemma~\ref{le:theta(0)=0}  to the inner function $\phi$ to see that $|\phi^{-1}(I_0)|=|I_0|$, $|\phi^{-1}(I_1)|=|I_1|$, whence $|\phi^{-1}(I_0)\cup\phi^{-1}(I_1)|=2\pi$. It follows then that (up to sets of measure 0) $E=\phi^{-1}(I_1)$ and $\T\setminus E=\phi^{-1}(I_0) $. We also have $|I|=|E|=|I_1|$ and therefore the required inner function $\theta$ can be obtained by composing $\phi$ with a rotation that maps $I_1$ onto $I$.
\end{proof}

When $|\partial E|=0$, an explicit formula for $\theta$ may be obtained from~\cite[Proposition 2.1]{BT}.

\begin{Theorem}\label{th:general sets}
Suppose $E\subset\T$, $|E| \in (0, 2 \pi)$. 
Then for any $n \geq 1$ we have
\[
\Lambda_n(E)\le \Lambda_n( \wideparen{(e^{-i |E|/2}, e^{i |E|/2})}).
\]
Moreover, 
$$\Lambda_1(E)= \Lambda_1(\wideparen{(e^{-i |E|/2}, e^{i |E|/2})})=r_{1,|E|/2}.$$
\end{Theorem}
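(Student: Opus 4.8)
The plan is to prove the inequality part via Theorem~\ref{th:interpolation}, reducing the general measurable set $E$ to the arc of equal length, and then to prove the equality for $n=1$ by exhibiting the extremal function explicitly through the inner function $\theta$ furnished by that theorem. Let $I = \wideparen{(e^{-i|E|/2}, e^{i|E|/2})}$, so that $|I| = |E|$, and invoke Theorem~\ref{th:interpolation} to obtain an inner function $\theta$ with $\theta(0)=0$ and $\theta^{-1}(I) = E$ almost everywhere. The key observation is that $\theta$ acts as a substitution on the circle that, thanks to Lemma~\ref{le:theta(0)=0}, preserves measure, and composition with $\theta$ should transport test functions for the $E$-problem to test functions for the $I$-problem.

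For the inequality $\Lambda_n(E) \le \Lambda_n(I)$, I would work from the dual description $\Lambda_n(E) = \dist(\bar z^n \chi_E, H^\infty)$ in~\eqref{eq:definition of lambda_n with distance}. First I would rewrite $\chi_E = \chi_I \circ \theta$ using $E = \theta^{-1}(I)$ (up to null sets). The natural move is to take the minimizing $g \in H^\infty$ for the arc problem, so that $\|\bar z^n \chi_I - g\|_\infty = \Lambda_n(I)$, and compose everything with $\theta$. Since $\theta(0)=0$, powers of $\theta$ behave well: $\bar z^n$ pulls back to $\overline{\theta}^n = \bar\theta^n$, and $g\circ\theta$ remains in $H^\infty$ because composition of an $H^\infty$ function with an inner function stays in $H^\infty$. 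The crucial point is that $\theta$ being a measure-preserving transformation of $\T$ gives $\|h\circ\theta\|_\infty = \|h\|_\infty$ for essentially bounded $h$, so composing the arc-extremal identity with $\theta$ yields $\dist(\bar\theta^n \chi_E, H^\infty) \le \Lambda_n(I)$. One then argues that $\dist(\bar z^n \chi_E, H^\infty) \le \dist(\bar\theta^n\chi_E, H^\infty)$, or more carefully reconciles the $\bar z^n$ in the definition of $\Lambda_n(E)$ with the $\bar\theta^n$ arising from the pullback; this reconciliation is where I expect the genuine work to lie.

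For the equality when $n=1$, I would exploit that $\bar z$ pulls back to $\bar\theta$ and that $\theta(0)=0$ forces $\theta = z\theta_1$ for an inner $\theta_1$, making the $n=1$ Hankel/Toeplitz structure transparent. The aim is to show that the arc-extremal function from Theorem~\ref{th:lambda_n(interval)} (there the minimizer is $\phi_1$ with $\phi = r_{1,|E|/2}\Phi_{1,|E|/2}$) composed with $\theta$ produces a unimodular-modulus competitor $u = \bar z(\phi\circ\theta - \chi_E)$ of constant absolute value $r_{1,|E|/2}$, and then to run the same Lemma~\ref{Lemma:Unimodular} machinery: verify $T_u$ is Fredholm with positive index via Lemma~\ref{Peller-2} and the essential-norm computation in Lemma~\ref{Lemma:Essential}, concluding $\|u\|_\infty = \dist(u, H^\infty) = \Lambda_1(E)$. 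The measure-preserving property keeps the essential norm at $\tfrac12 < r_{1,|E|/2}$ after composition, so the Fredholm hypotheses survive.

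The main obstacle will be controlling the index after composing with $\theta$ in the $n=1$ equality: $\theta$ may be an arbitrary inner function (possibly a Blaschke product of high or infinite degree, or even singular), so the winding-number argument used for the arc does not transfer verbatim, and I would need to argue that the key index computation $\ind T_u = 1$ survives. The cleanest route is probably to avoid recomputing the winding number directly and instead combine the already-established inequality $\Lambda_1(E) \le r_{1,|E|/2}$ with a matching lower bound: produce an explicit unit-ball competitor in~\eqref{eq-Lambda} by pulling back the arc's near-extremal functions through $\theta$, using the measure-preserving change of variables to preserve both the $H^1$-norm and the value of the pairing integral. Showing that this pullback keeps the competitor in $b(H^1)$ and exactly reproduces the arc integral (again via Lemma~\ref{le:theta(0)=0}) is the delicate step, but it sidesteps the Fredholm-index difficulty entirely and yields $\Lambda_1(E) \ge r_{1,|E|/2}$, closing the equality.
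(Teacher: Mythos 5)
Your proposal is correct and, in its final form, follows essentially the same route as the paper: the inequality $\Lambda_n(E)\le\Lambda_n(I)$ comes from the dual distance formulation plus composition with the inner function $\theta$ of Theorem~\ref{th:interpolation}, and the $n=1$ equality comes from your ``cleanest route'' --- pulling back unit-ball competitors $F\mapsto F\circ\theta$ and using the measure-preserving change of variables --- which is exactly the paper's argument. Your instinct to abandon the first idea (composing the arc-extremal $\phi$ with $\theta$ and rerunning the Fredholm/index machinery) was sound: for $\theta$ of infinite degree, $\overline{\phi\circ\theta}$ need not lie in $H^\infty+\mathcal{C}$ and the winding-number computation breaks down; the paper never attempts this.

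The one step you flag as ``where the genuine work lies'' is in fact a one-line observation. Since $\theta(0)=0$, write $\theta=z\theta_1$ with $\theta_1$ inner; then for any $h\in H^\infty$,
\[
\|\bar\theta^n\chi_E-h\|_\infty=\big\|\theta_1^n\big(\bar z^n\bar\theta_1^n\chi_E-h\big)\big\|_\infty
=\|\bar z^n\chi_E-\theta_1^n h\|_\infty\ \ge\ \dist(\bar z^n\chi_E,H^\infty),
\]
because $|\theta_1|=1$ a.e.\ and $\theta_1^n h\in H^\infty$; this gives precisely your claimed inequality $\dist(\bar z^n\chi_E,H^\infty)\le\dist(\bar\theta^n\chi_E,H^\infty)$. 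The paper sidesteps even this by phrasing the dual problem as $\dist(\chi_E,z^nH^\infty)$: if $g\in z^nH^\infty$ then $g\circ\theta\in z^nH^\infty$ (again because $\theta^n$ is divisible by $z^n$), so composing $\chi_I-g$ with $\theta$ immediately yields $\Lambda_n(E)\le\Lambda_n(I)$ with no reconciliation needed. Finally, it is worth being explicit about why equality is obtained only for $n=1$: in that case the pairing in \eqref{eq-Lambda} reduces to $\frac{1}{2\pi}\int\chi_E F\,dt$ with no weight $\bar\zeta^{\,n-1}$, so it transforms exactly under $F\mapsto F\circ\theta$; for $n\ge 2$ the pullback would produce $\bar\theta^{\,n-1}$ in place of $\bar\zeta^{\,n-1}$, and the argument yields only the one-sided bound.
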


Recall that the definition of $r_{n, \alpha}$ is given in Theorem \ref{th:formula for lambda_1}.

\begin{proof} Let 
$$I :=\wideparen{(e^{-i |E|/2}, e^{i |E|/2})}$$ and $\theta$ be corresponding inner function produced by Theorem~\ref{th:interpolation}.
Then $\theta$ is measure preserving and $\chi_{I}\circ\theta=\chi_{E}$. If $g\in z^nH^\infty$, then $g\circ \theta\in z^n H^\infty$, and obviously
\[
\|\chi_{I}-g\|_\infty=\|\chi_{I}\circ \theta-g\circ\theta\|_\infty=
\|\chi_{E}-g\circ\theta\|_\infty.
\]
By taking the infimum with respect to all $g\in z^nH^\infty$, we obtain 
$$\dist(\chi_{E}, z^nH^\infty)\le \dist(\chi_{I}, z^nH^\infty),$$ or $\Lambda_n(E)\le \Lambda_n(I)$.

To prove the opposite inequality in the case of $\Lambda_1$, note, from the fact that $\theta$ is measure preserving, that, if $F$ is in the unit ball of $H^1$, then $F\circ \theta$ is also in the unit ball of $H^1$. Moreover,
\[
\frac{1}{2\pi} \int_{-\pi}^{\pi} \chi_{I} F dt=
\frac{1}{2\pi} \int_{-\pi}^{\pi} (\chi_{I}\circ \theta)( F \circ\theta) dt=   \frac{1}{2\pi} \int_{-\pi}^{\pi} \chi_{E} (F\circ\theta )dt.
\]
By taking the supremum of the absolute value with respect to all $F$ in the unit ball of $H^1$, we obtain $\Lambda_1(I)\le \Lambda_1(E)$, which is the desired inequality.
\end{proof}
\vskip .05in

\noindent {\bf Acknowledgement:} The authors thank Damien Gayet for some useful suggestions concerning conformal mappings, and Gilles Pisier for bringing to our attention reference~\cite{Qiu}.

\bibliography{FourierExtremal} 

\end{document}